\begin{document}

\newcommand{\naturalsSymbol}[1]
{
  \ifthenelse{\equal{#1}{N}}
 {\newcommand{\nat}{\ensuremath{\mathbb{N}}}
 }{}
 \ifthenelse{\equal{#1}{omega}}
 {\newcommand{\nat}{\ensuremath{\omega}}
 }{}
}

\naturalsSymbol{omega}

\newcommand{\vg}[1]{}


\newcommand{\n}{\ensuremath{n \in \nat}}
\newcommand{\iin}{\ensuremath{i \in \nat}}
\newcommand{\s}{\ensuremath{s \in \nat}}

\newcommand{\ep}{\ensuremath{\varepsilon}}
\newcommand{\eq}{\ensuremath{\Longleftrightarrow}}

\newcommand{\del}{\ensuremath{\Delta^1_1}}
\newcommand{\sig}{\ensuremath{\Sigma^1_1}}
\newcommand{\pii}{\ensuremath{\Pi^1_1}}
\newcommand{\G}{\ensuremath{\Gamma}}

\newcommand\tboldsymbol[1]{%
\protect\raisebox{0pt}[0pt][0pt]{%
$\underset{\widetilde{}}{\boldsymbol{#1}}$}\mbox{\hskip 1pt}}

\newcommand{\bolddel}{\ensuremath{\tboldsymbol{\Delta}^1_1}}
\newcommand{\boldpii}{\ensuremath{\tboldsymbol{\Pi}^1_1}}
\newcommand{\boldsig}{\ensuremath{ \tboldsymbol{\Sigma}^1_1}}
\newcommand{\boldG}{\ensuremath{\tboldsymbol{\Gamma}}}


\newcommand{\om}{\ensuremath{\omega}}

\newcommand{\C}{\ensuremath{\mathbb C}}
\newcommand{\Q}{\ensuremath{\mathbb Q}}
\newcommand{\N}{\ensuremath{\mathbb N}}
\newcommand{\Z}{\ensuremath{\mathbb Z}}
\newcommand{\R}{\ensuremath{\mathbb R}}
\newcommand{\K}{\ensuremath{\mathbb K}}


\newcommand{\ds}{\ensuremath{\displaystyle}}
\renewcommand{\qedsymbol}{$\dashv$}


\newcommand{\ca}[1]{\ensuremath{\mathcal{#1}}}
\newcommand{\set}[2]{\ensuremath{\{#1 \ \mid \ #2\}}}
\newcommand{\arr}[1]{\ensuremath{\overrightarrow{#1}}}
\newcommand{\barr}[1]{\ensuremath{\overline{#1}}}



\newcommand{\tu}[1]{\textup{#1}}
\newcommand{\param}{\ensuremath{\mathit{O}}}
\newcommand{\seq}{\ensuremath{\mathtt{Seq}}}
\newcommand{\norm}[2]{\ensuremath{\|#1-#2\|}}
\newcommand{\distf}{\ensuremath{\textrm{Df}(\om)}}
\newcommand{\completion}[1]{\ensuremath{[(\om,#1)]}}


\newtheorem{theorem}{Theorem}[section]
\newtheorem{lemma}[theorem]{Lemma}
\newtheorem{definition}[theorem]{Definition}
\newtheorem{proposition}[theorem]{Proposition}
\newtheorem{corollary}[theorem]{Corollary}
\newtheorem{remark}[theorem]{Remark}
\newtheorem{remarks}[theorem]{Remarks}
\newtheorem{example}[theorem]{Example}
\newtheorem{examples}[theorem]{Examples}

\title{Turning Borel sets into Clopen sets effectively}

\author[V. Gregoriades]{Vassilios Gregoriades}

\address{Technische Universit\"{a}t Darmstadt, Fachbereich Mathematik,
Arbeitsgruppe Logik, Schlo{\ss}gartenstra{\ss}e 7,
64289, Darmstadt, Germany.}
\email{gregoriades [at] mathematik [dot] tu-darmstadt [dot] de}

\thanks{The first half of this article (roughly until Remark \ref{remark the parameter may not be in del}) are included in the Ph.D. Thesis of the author, which was submitted and approved in 2009 by the University of Athens, Greece. The author would like to thank his supervisor \textsc{Yiannis Moschovakis} for his motivating ideas and his invaluable guidance. The author is currently a post-doctoral researcher at TU Darmstadt, Germany in the workgroup of \textsc{Ulrich Kohlenbach}, whom the author would like to thank for his substantial support.}

\subjclass[2010]{Primary 03E15, 54H05, 03D55.}
\keywords{Turning Borel sets into clopen sets, extension of a Polish topology, hyperarithmetical points, uniformity results.}

\date{\today}

\maketitle

\begin{abstract}
We present the effective version of the theorem about turning Borel sets in Polish spaces into clopen sets while preserving the Borel structure of the underlying space. We show that under some conditions the emerging parameters can be chosen in a hyperarithmetical way and using this we prove a uniformity result.
\end{abstract}

\section{Introduction.}

One of the topics of effective descriptive set theory is the refinement of well-known theorems in recursive theoretic terms. This ``effective" version of a theorem is stronger than the original one and as it is often the case it provides a uniformity result which does not seem to follow from the original statement. A typical example is \emph{Suslin's Theorem}, which states that every bi-analytic subset of a Polish space is Borel, and its refinement the \emph{Suslin-Kleene Theorem}, which provides a recursive (and thus continuous) function $u: \ca{N} \times \ca{N} \to \ca{N}$, where $\ca{N} = \om^\om$, such that whenever $\alpha$ and $\beta$ are codes of complementary analytic sets, say $A$ and $\ca{X} \setminus A$, then $u(\alpha, \beta)$ is a Borel code of $A$, c.f. 7B.4 in \cite{yiannis_dst} and \cite{yiannis_classical_dst_as_a_refinement_of_effective_dst}.

In this article we prove the effective version of the following well-known theorem of classical descriptive set theory: if $(\ca{X},\ca{T})$ is a Polish space and $A$ is a Borel subset of \ca{X}, then $(*)$ there is a Polish topology $\ca{T}_{\infty}$ on \ca{X}, which extends \ca{T}, yields the same Borel sets as \ca{T}, and $A$ is clopen in $(\ca{X},\ca{T}_{\infty})$, c.f. Theorem \ref{theorem main}. \vg{Let us see how the effective version of this theorem should be formulated. First we start with a recursively presented Polish space (see below for the definitions) and we replace the term ``Borel" with ``\del". The arising space $(\ca{X},\ca{T}_{\infty})$ may not be recursively presented but we can always relativize with respect to some parameter $\ep \in \ca{N } = \om^\om$. So we will ask for $(\ca{X},\ca{T}_{\infty})$ to be an $\ep$-recursively presented Polish space. Then we replace the terms ``clopen" with ``$\Delta^0_1(\ep)$" and ``Borel in $\ca{T}_{\infty}$" with ``$\del(\ep)$". Summarizing the effective version should read as follows. Suppose that $(\ca{X},d)$ is a recursively presented Polish space and $A$ is a \del \ subset of \ca{X}. Then there is some $\ep \in \ca{N}$ and a Polish topology $\ca{T}_{\infty}$ which extends \ca{T} such that: (1) the space $(\ca{X},\ca{T}_{\infty})$ is $\ep$-recursively presented; (2) the set $A$ is a $\Delta^0_1(\ep)$ subset of $(\ca{X},\ca{T}_{\infty})$ and (3) any $B \subseteq \ca{X}$ is a $\del(\ep,\alpha)$ subset of $(\ca{X},\ca{T})$ exactly when $B$ is a $\del(\ep,\alpha)$ subset of $(\ca{X},\ca{T}_{\infty})$ for any parameter $\alpha$, c.f. Theorem \ref{theorem main}. A natural question which arises is to determine the best such parameter \ep \ in terms of definability. We will see that the latter \ep \ can be chosen to be recursive in Kleene's \param \ and we will spend an entire section of this article dealing with the problem of choosing a hyperarithmetical such \ep.}

Our proof will not follow the same way as the usual proof of the latter theorem. We will instead present a different proof of this result, which is less well-known (and perhaps new). We will see that there are certain advantages with our approach which allow us to proceed with the effective version. Let us give a very brief description of the usual proof. One starts with a Polish space \ca{X}, defines $\ca{S} = \set{A \subseteq \ca{X}}{A \ \textrm{is Borel and satisfies $(*)$ above}}$ and shows that \ca{S} is a $\sigma$-algebra which contains the open sets and so it contains every Borel set. The effective version of this proof requires the notion of the effective $\sigma$-field (c.f. \cite{yiannis_dst} Section 7B) and a rather messy encoding of Polish topologies as we proceed with the induction. Moreover this approach seems to provide little information on the best choice for the parameters which shall emerge.

On the other hand our approach is based on the following result of Lusin-Suslin: every Borel subset of a Polish space is the injective continuous image of closed subset of \ca{N}, c.f. 13.7 \cite{kechris_classical_dst}.  So let us assume that \ca{X} is a Polish space and that $A$ is a Borel subset of \ca{X}. Then there are closed sets $F_1, F_2 \subseteq \ca{N}$ and continuous functions $\pi_1, \pi_2 : \ca{N} \to \ca{X}$ such that $\pi_i$ is injective on $F_i$, $i=1,2$, $\pi_1[F_1] = A$ and $\pi_2[F_1] = \ca{X} \setminus A$. Now we define a distance function $d_A$ on $A$ in such a way that $\pi_1$ becomes an isometry i.e., $d_A(x,y) = \pi_1(\alpha,\beta)$, where $\alpha,\beta \in F_1$ with $\pi_1(\alpha) = x$ and $\pi_1(\beta) = y$. Similarly we define the distance function $d_{A^c}$ on the complement of $A$ and then we consider the direct sum $(A,d_A) \oplus (\ca{X} \setminus A, d_{A^c})$. The latter space has all required properties.\footnote{One may express some reservations on whether our suggested proof is indeed independent from the original one, because one can derive the Lusin-Suslin Theorem from the theorem about turning Borel sets into clopen, c.f. \cite{kechris_classical_dst}. There are however straightforward proofs of the Lusin-Suslin Theorem, see for example 1G.5 or -for a very different proof- 4A.7 in \cite{yiannis_dst}. Therefore the proof that we are now suggesting is indeed independent from the usual proof. Nevertheless as it becomes clear from our comments these two theorems are ``equivalent" in the sense that we can prove one from the other.}

The advantage of the latter proof is that it reduces the problem from Borel sets to closed sets where the verification of Cauchy-completeness is obvious. Moreover it is straightforward to effectivize, for the effective version of the Luslin-Suslin Theorem has been proved by Moschovakis c.f. 4A.7 \cite{yiannis_dst}. \vg{Therefore the problem is reduced into finding a recursive presentation inside $\Pi^0_1$ sets, which can be done recursively in Kleene's \param. This method will also give us some sufficient conditions in order to choose the recursive presentation in a \del \ way.} Finally, as we will see, this approach provides very clear information about the emerging parameters.

In the rest of this section we recall the basic definitions and notations. We assume that the reader is familiar with recursion theory and effective descriptive set theory, c.f. \cite{yiannis_dst} Chapter 3. In the next section we prove our main theorem, c.f. Theorem \ref{theorem main}, and afterwards we examine the problem of choosing the emerging parameters in a \del \ way, c.f. Theorems \ref{theorem countable sets admit good parameter in del} and \ref{theorem G_delta and complement G_delta}. We conclude this article with a related uniformity result about choosing the extended topology $\ca{T}_{\infty}$ in a Borel way, c.f. Theorem \ref{theorem uniformity result}. This is the only result of this article whose statement is purely classical in the sense that it involves no notions from effective theory. Its proof however is a corollary of almost all preceding effective results.

\textbf{Notation and Definitions.} By \om \ we mean the least infinite ordinal, which we identify with the set of natural numbers, and by $\om_1$ the least uncountable ordinal. We fix once and for all a recursive encoding $\langle \cdot \rangle$ of all finite sequences of naturals by a natural number. Number $0$ will be the code of the empty sequence. We denote by \seq \ the recursive set of all codes of finite sequences in \om. If $s \in \seq$ is a code of $u = (u_0,\dots,u_{n-1})$ we define $lh(s)=n$ and if $i < n$ we put $(s)_i = u_i$. If $s \in \seq$ and $i \geq lh(s)$ or if $s \not \in \seq$ and $i$ is arbitrary we define $(s)_i$ to be $0$. Finally we fix the following enumeration of the rational numbers: $q_s = (-1)^{(s)_0}\frac{(s)_1}{(s)_2 + 1}$ for $s \in \om$.

We denote by \ca{N} the space $\om^\om$ of all infinite sequences of naturals with the product topology. The space \ca{N} is the \emph{Baire space}. The members of \ca{N} will be denoted by lowercase Greek letters such as $\alpha, \beta$ etc. We fix the usual distance function $p_\ca{N}$ on $\ca{N}$, which is defined as follows: $p_\ca{N}(\alpha,\beta) = (\textrm{least} \ k \ \alpha(k) \neq \beta(k) + 1)^{-1}$ for $\alpha \neq \beta$. We may view every $\alpha \in \ca{N}$ as a code of an infinite sequence in \ca{N}. To be more specific, we define $(\alpha)_i(n) = \alpha(\langle i,n \rangle)$ for all $i \in \om$ and so $\alpha$ gives rise to the sequence $((\alpha)_i)_{\iin}$. Of course we may apply the inverse procedure: if $(\alpha_i)_{\iin}$ is a sequence in \ca{N} there is some $\alpha \in \ca{N}$ such that $(\alpha)_i = \alpha_i$ for all \iin. For $\alpha, \beta \in \ca{N}$ we will denote by $\langle \alpha, \beta \rangle$ the unique $\gamma \in \ca{N}$ such that $(\gamma)_0 = \alpha$, $(\gamma)_1 = \beta$ and $\gamma(t) = 0$ if $t \neq \langle i,n \rangle$ for all $i=0,1$ and all \n.

We will often identify relations with the sets that they define and write $P(x)$ instead of $x \in P$.

A topological space is a \emph{Polish space} if it is separable and is metrizable by a complete distance function. We will call such a distance function as a \emph{suitable} distance function. We employ the standard hierarchy $(\tboldsymbol{\Sigma}^0_\xi)_{\xi < \om_1}$ of Borel sets in Polish spaces c.f. \cite{kechris_classical_dst}. Suppose now that $(\ca{X},d)$ is a complete and separable metric space. A sequence $(x_n)_{\n}$ is a \emph{recursive presentation of $(\ca{X},d)$} if (1) the set \set{x_n}{\n} is dense in \ca{X} and (2) the relations of $\om^4$ defined by $P_{<}(i,j,k,m) \eq d(x_i,x_j) < \frac{k}{m+1}$ and $P_{\leq} (i,j,k,m) \eq d(x_i,x_j) \leq \frac{k}{m+1}$ are recursive. We say that $(\ca{X},d)$ \emph{admits a recursive presentation} or that \emph{$(\ca{X},d)$ is recursively presented} if there is a sequence $(x_n)_{\n}$ in \ca{X} which satisfies the previous properties (1) and (2). For every complete space $(\ca{X},d)$ with a recursive presentation $(x_n)_{\n}$ we consider the set $B(x_n,m,k) = \set{x \in \ca{X}}{d(x,x_n) < \frac{m}{k+1}}$. The latter is either the empty set or a ball with center $x_n$ and radius $\frac{m}{k+1}$. For $s \in \om$ define $N(\ca{X},s) = B(x_{(s)_0},(s)_1,(s)_2)$. The family \set{N(\ca{X},s)}{s \in \om} is the associated \emph{neighborhood system of \ca{X}} (with respect to $(x_n)_{\n}$ and $d$) and it is clear that it forms a basis for the topology of \ca{X}. We say that a Polish space \ca{X} is recursively presented if there is a suitable distance function $d$ such that the corresponding space $(\ca{X},d)$ is recursively presented. When we refer to a recursively presented Polish space we will always assume that we are given the suitable distance function and the recursive presentation. Standard examples of recursively presented Polish spaces are the Baire space \ca{N}, the real numbers and \om.

The property of being recursively presented is clearly carried out to finite products and sums of spaces i.e., if $\ca{X}_1, \dots, \ca{X}_n$ are recursively presented Polish spaces then both $\ca{X}_1 \times \dots \times \ca{X}_n$ and $\ca{X}_1 \oplus \dots \oplus \ca{X}_n$ are recursively presented. We fix once and for all a scheme for passing from the recursive presentations of finitely many spaces to recursive presentations of their sum and product.

Suppose that $(\ca{X},d)$ is complete and recursively presented. We say that $A \subseteq \ca{X}$ \emph{is in $\Sigma^0_1$} (or that \emph{$A$ is a $\Sigma^0_1$ set}, or that \emph{$A$ is semirecursive}) if there is a recursive function $f: \om \to \om$ such that $A = \cup_{s \in \om} N(\ca{X},f(s))$. In other words $\Sigma^0_1$ sets are the unions of a recursive collection from the family of our fixed open neighborhoods. This definition suggests that the property of being a $\Sigma^0_1$ set depends on the way we have encoded the basic neighborhoods $N(\ca{X},s)$; however this is not the case c.f. 3C.12 \cite{yiannis_dst}. (It does depend of course on the distance function and the recursive presentation.) The set $A$ is in $\Pi^0_1$ if $\ca{X} \setminus A$ is in $\Sigma^0_1$. Inductively we define the family of $\Sigma^0_{n+1}$ subsets of \ca{X} as the family of all sets which are the projection along \om \ of a $\Pi^0_n$ subset of $\ca{X} \times \om$ and $\Pi^0_{n+1}$ sets as the complements of sets in $\Sigma^0_{n+1}$. The set $A$ is in \sig \ if it is the projection along \ca{N} of a $\Pi^0_1$ subset of $\ca{X} \times \ca{N}$. The set $A$ is in \pii \ if its complement is in \sig \ and $A$ is in \del \ if it is both in \sig \ and \pii. Of course all these definitions coincide with the usual ones of Kleene in the case where $\ca{X} = \om$ or $\ca{X} = \ca{N}$.

A function $f: \ca{X} \to \ca{Y}$ between recursively presented Polish spaces is \emph{recursive} (or \emph{$\Sigma^0_1$-recursive}) if the relation $R^f \subseteq \ca{X} \times \om$ defined by $R^f(x,s) \eq f(x) \in N(\ca{Y},s)$ is in $\Sigma^0_1$. Similarly the function $f$ is \emph{\del-recursive} if the previous set $R^f$ is in \del. A point $x \in \ca{X}$ \emph{is in \del} or \emph{it is a \del \ point} exactly when the relation $U \subseteq \om$ defined by $U(s) \eq x \in N(\ca{X},s)$, is in \del.

A very important notion is the one of relativization with respect to some parameter. Suppose that \ca{X} and \ca{Y} are recursively presented Polish spaces and that $y \in \ca{Y}$. A subset $A$ of \ca{X} is in $\Sigma^0_1(y)$ if there is some $P \subseteq \ca{Y} \times \ca{X}$ such that $A$ is the $y$-section of $P$ i.e., $A = P_y:=\set{x \in \ca{X}}{P(x,y)}$. Similarly one defines the classes of sets $\Sigma^0_n(y)$, $\Pi^0_n(y)$, $\sig(y)$, $\pii(y)$, $\del(y)$, the $\del(y)$-recursive functions and the $\del(y)$ points. If a point $x$ is in $\del(y)$ we will also say that \emph{$x$ is hyperarithmetical in $y$}. The relativization applies to recursive presentations as well. Consider a point $\ep \in \ca{N}$. A sequence $(x_n)_{\n}$ in a complete metric space $(\ca{X},d)$ is an \emph{\ep-recursive presentation of $(\ca{X},d)$} if the previous conditions (1) and (2) are satisfied with the modification that the relations $P_<$ and $P_\leq$ are now $\ep$-recursive. One can then repeat all previous definitions by replacing everywhere the term ``recursive" with ``$\ep$-recursive". For example a subset $A$ of an $\ep$-recursively presented Polish space \ca{X} is \emph{$\ep$-semirecursive} if there is an $\ep$-recursive function $f: \om \to \om$ such that $A = \cup_{s \in \om}N(\ca{X},f(s))$. We will denote the class of $\ep$-semirecursive sets with $\Sigma^0_1(\ep)$. There is a potential double meaning for $\Sigma^0_1(\ep)$, because every recursive presentation is also an $\ep$-recursive presentation for any $\ep \in \ca{N}$. One can check however that the $\ep$-recursive unions of basic neighborhoods $N(\ca{X},s)$ -where \ca{X} is recursively presented- are exactly the $\ep$-sections of $\Sigma^0_1$ subsets of $\ca{N} \times \ca{X}$. So no conflict arises.

It is not true that every Polish space is recursively presented but it is easy to see that every Polish space is recursively presented in some parameter $\ep$. All theorems about recursively presented Polish spaces are transferred to $\ep$-recursively presented Polish spaces. The latter claim is the \emph{Relativization Principle}. This principal is fundamental for the applications of effective descriptive set theory to ``classical mathematics" such as Theorem \ref{theorem uniformity result}. For more information refer to 3I of \cite{yiannis_dst}.

A function $f: (\ca{X},d_{\ca{X}}) \to (\ca{Y},d_{\ca{Y}})$ is an \emph{isometry} if $d_{\ca{Y}}(f(x),f(y)) = d_{\ca{X}}(x,y)$ for all $x,y \in \ca{X}$ and $f$ is surjective. If the spaces $(\ca{X},d_{\ca{Y}})$ and $(\ca{Y},d_{\ca{Y}})$ are complete and recursively presented it is not hard to verify that a set $A$ is in $\Gamma$ exactly when $f[A]$ is in $\Gamma$, where $\Gamma$ is any of the classes $\Sigma^0_n(\alpha)$, $\Pi^0_n(\alpha)$, $\sig(\alpha)$, $\pii(\alpha)$ and $\del(\alpha)$.

We are going to use some fundamental theorems of effective descriptive set theory including (but not restricted to) the Theorem on Restricted Quantification (4D.3), the Effective Perfect Set Theorem (4F.1), the Strong $\Delta$-Selection Principle (4D.6) and the theorem about the existence of \del \ points inside \pii \ non-meager sets (4F.20). (All previous references are from \cite{yiannis_dst}). We should mention explicitly the following result.

\begin{theorem}{\normalfont (c.f. 4A.7 \cite{yiannis_dst})}
\label{theorem yiannis 4A7}
Every \del \ subset of a recursively presented Polish space is the recursive injective image of $\Pi^0_1$ subset of \ca{N}.
\end{theorem}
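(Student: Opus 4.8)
The plan is to reconstruct the effective Lusin--Suslin theorem by producing an \emph{effective Lusin scheme} for $A$ and then reading off the map and its domain from that scheme. Throughout I would work with the fixed neighborhood system $N(\ca{X},s)$ and the associated complete distance, so that conditions such as ``$\mathrm{diam}(A_s)\le 2^{-lh(s)}$'' and ``$\overline{N(\ca{X},s')}\subseteq N(\ca{X},s)$'' become arithmetical conditions on the codes $s,s'$. First I would record the easy half, which isolates where injectivity is the only difficulty: since $A$ is in particular \sig, there is a $\Pi^0_1$ set $P\subseteq\ca{N}$ and a recursive $g:\ca{N}\to\ca{X}$ with $g[P]=A$, immediately from the normal form for \sig \ sets together with the standard recursive surjection of \ca{N} onto \ca{X}. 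Thus a non-injective recursive image representation is free, and the entire content of the theorem is to replace $g$ by a map injective on a $\Pi^0_1$ domain. This is exactly the point at which \del \ is needed rather than merely \sig, for a properly \sig \ set is never an injective continuous image of a closed set (its image would then be Borel by the classical Lusin--Suslin theorem).

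Next I would build the scheme. Using only the closure of \del \ under recursive intersections and set differences, I would construct a family $(A_s)_{s\in\seq}$ of uniformly \del \ subsets of \ca{X}, with \del-codes recursive in $s$, such that $A_{\langle\rangle}=A$; for each $s$ the sets $A_{s^\frown i}$ $(i\in\om)$ are pairwise disjoint with union $A_s$ and with $\overline{A_{s^\frown i}}\subseteq A_s$; and $\mathrm{diam}(A_s)\le 2^{-lh(s)}$. At each level this is the usual disjointification of $A_s$ against the recursively enumerated basic neighborhoods of radius $<2^{-lh(s)-1}$. Disjointness forces each $x\in A$ to lie in exactly one piece at every length, hence to determine a unique branch $\alpha_x\in\ca{N}$ with $x\in A_{\barr{\alpha_x}(n)}$ for all $n$; completeness together with $\mathrm{diam}\to 0$ then yields $\bigcap_n\overline{A_{\barr{\alpha_x}(n)}}=\{x\}$, and conversely every branch all of whose pieces are nonempty names a single point of $A$. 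The induced map $\alpha\mapsto(\text{the point named by }\alpha)$ is recursive, and it is injective on the set of ``good'' branches precisely because the scheme is disjoint.

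The hard part, and the real heart of the matter, is that the domain just described, namely $\set{\alpha\in\ca{N}}{\forall n\ A_{\barr{\alpha}(n)}\neq\emptyset}$, is only \sig \ as it stands: nonemptiness of a \del \ set is a \sig \ condition, so the naive reading gives a \sig \ (indeed \del) domain, whereas the $\Pi^0_1$ subsets of \ca{N} are exactly the branch spaces $[T]$ of recursive trees. Resolving this is where I expect all the effort to go, and where the \del \ hypothesis is used decisively. The idea is to encode the scheme along a recursive tree of neighborhood addresses and to pin down a canonical address for each point, so that membership of an address in the tree becomes a recursive, not merely \sig, condition; nonextendible nodes (dead ends) are permitted and do no harm, since the map is evaluated only on infinite branches. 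To carry this out while keeping the selected address unique I would invoke the Strong $\Delta$-Selection Principle (4D.6) to attach to each nonempty node a canonical \del \ point of the corresponding piece, the Theorem on Restricted Quantification (4D.3) to keep the bookkeeping within the arithmetical classes, and a boundedness argument to confine the construction below a single recursive ordinal so that the address coding closes up into a genuinely recursive tree $T$.

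The output of this last step is a $\Pi^0_1$ set $F=[T]$ together with a recursive $f$ that is injective on $F$ and satisfies $f[F]=A$, which is the assertion. I expect the delicate step to be precisely the conversion of the \sig \ nonemptiness condition into a recursive branching condition on addresses without destroying either the disjointness of the scheme or the diameter control; everything preceding it (the easy \sig \ representation, the disjointification, and the verification that the branch-to-point map is recursive and injective on good branches) is routine given the closure properties of \del \ and completeness of the distance.
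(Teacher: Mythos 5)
The paper itself offers no proof of Theorem~\ref{theorem yiannis 4A7}: it is quoted as a black box from 4A.7 of \cite{yiannis_dst}, so your proposal must be judged against that standard proof, and it has two genuine gaps. The first is in the scheme construction, which you call routine. Disjointifying $A_s$ against basic neighborhoods of small radius produces children $A_{s^\frown i}=A_s\cap N_{k}\setminus\bigcup_{j<k}N_{j}$ whose closures are contained only in $\barr{A_s}$, not in $A_s$; the condition $\barr{A_{s^\frown i}}\subseteq A_s$ that you list is precisely what this chopping fails to deliver. Without it the branch-to-point map does not do what you need: already for $\ca{X}=\R$ and $A=(0,1)\setminus\{1/2\}$ there are branches all of whose pieces are nonempty that name the point $1/2\notin A$, and two such branches (one from each side) name the \emph{same} point, so both $\pi[F]=A$ and injectivity fail. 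Classically, manufacturing a scheme with the closure condition for an arbitrary Borel set is exactly where the transfinite induction over the Borel hierarchy is spent; it cannot be had by a level-by-level disjointification.

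The second gap is fatal and concerns the device you propose for what you correctly identify as the heart of the matter. Attaching ``a canonical \del\ point to each nonempty piece'' is impossible in general: a nonempty \del\ (indeed $\Pi^0_1$) set need not contain any \del\ point at all --- the paper's Remark~\ref{remark the parameter may not be in del} is built on exactly such a set, and for such an $A$ \emph{every} nonempty piece of your scheme is \del-pointless. The Strong $\Delta$-Selection Principle cannot rescue this, since its hypothesis is the existence of \del\ witnesses, which is exactly what fails; Restricted Quantification preserves \pii\ definitions but creates no witnesses either. Indeed the oracle you would need, $\{s \ : \ A_s\neq\emptyset\}$, is a genuinely \sig\ subset of \om\ which in general has the hyperdegree of \param\ (this phenomenon is in substance the subject of Section 3 of the paper), so no bookkeeping can make membership of addresses in your tree recursive along this route; carried out honestly, your construction yields the statement relativized to a parameter recursive in \param, which is essentially just the relativization of the classical theorem, not the effective one. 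The proof the paper defers to runs along entirely different lines and never decides nonemptiness: after the standard reduction to $\ca{X}=\ca{N}$, write $\ca{X}\setminus A$ as the projection of a recursive tree, so $x\in A$ iff the section tree $T(x)$ is wellfounded; since $A$ is also \sig, $\Sigma^1_1$-boundedness bounds the ranks of these trees by a fixed recursive ordinal, coded by a recursive wellordering $\prec$; then $x\in A$ iff there is an order-preserving map of $T(x)$ into $\prec$ --- a $\Pi^0_1$ matrix in $(x,\beta)$ --- and the rank function is a canonical witness singled out by an arithmetical condition, so uniqueness of witnesses plus the least-witness trick (the same trick as the paper's own Lemma~\ref{lemma pi02 has continuous inverse}) converts this into a $\Pi^0_1$ subset of \ca{N} projecting injectively and recursively onto $A$. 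Note where the hypothesis enters: \del-ness is used through boundedness, not through the existence of \del\ points. Your instinct that dead ends are harmless is correct --- the true proof's tree has plenty --- but its infinite branches are canonical wellfoundedness witnesses, not addresses of nonempty pieces.
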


\vg{\begin{theorem}{\normalfont(Theorem on Restricted Quantification, c.f. 4D.3 \cite{yiannis_dst} and \cite{kleene_quantification_numbertheoretic_functions})}
\label{theorem on restricted quantification}
Suppose that \ca{X} and \ca{Y} are recursively presented Polish spaces and that $Q
\subseteq \ca{X} \times \ca{Y}$ is in \pii. The the set $P \subseteq \ca{X}$ defined by
\[
P(x) \eq (\exists y \in \del(x))Q(x,y)
\]
is also in \pii.
\end{theorem}

\begin{theorem}{ \normalfont (The Effective Perfect Set Theorem, c.f. 4F.1 \cite{yiannis_dst} and \cite{harrison_phd_thesis})}
\label{theorem effective perfect set theorem}
Suppose that \ca{X} is a recursively presented Polish space and that $P$ is a \sig \ subset of \ca{X}. If $P$ has at least one member
not in \del \ then $P$ contains a perfect subset.
\end{theorem}

\begin{theorem}{\normalfont (c.f. 4F.20 \cite{yiannis_dst}, \cite{thomason_the_forcing_method_and_the_upper_lattice_of_hyperdegrees}, \cite{hinman_some_applications_of_forcing_to_hierarchy_problems_in_arithmetic}, \cite{kechris_measure_and_category_in_effective_descriptive_set_theory})}
\label{theorem non meager pii contains a member in del}
If \ca{X} is a recursively presented Polish space and $P$ is a \pii \ subset of \ca{X}, which is non-meager, then $P$ contains a member in \del.
\end{theorem}
}

\section{The effective version.}

In this section we present our main theorem which is the effective version of the theorem about turning a Borel subset of a Polish space into a clopen set.

\begin{theorem}

\label{theorem main}

Suppose that $(\ca{X},\ca{T})$ is a recursively presented Polish space, $d$ is a suitable distance function for $(\ca{X},\ca{T})$ and $A$ is a \del \ subset of \ca{X}. Then there exist an $\ep_A \in \ca{N}$, which is recursive in \param \ and a Polish topology $\ca{T}_{\infty}$ with suitable distance function $d_{\infty}$, which extends $\ca{T}$ and has the following properties: \tu{(1)} the Polish space $(\ca{X},\ca{T}_{\infty})$ is $\ep_A$-recursively presented, \tu{(2)} the set $A$ is a $\Delta^0_1(\ep_A)$ subset of
$(\ca{X},d_{\infty})$, \tu{(3)} if $B \subseteq \ca{X}$ is a $\del(\alpha)$ subset
of $(\ca{X},d)$, where $\alpha \in \ca{N}$, then $B$ is a $\del(\ep_A,\alpha)$ subset of
$(\ca{X},d_{\infty})$ and \tu{(4)} if $B \subseteq \ca{X}$ is a $\del(\ep_A,\alpha)$ subset of $(\ca{X},d_{\infty})$, where $\alpha \in \ca{N}$, then $B$ is a $\del(\ep_A,\alpha)$ subset of $(\ca{X},d)$.
\end{theorem}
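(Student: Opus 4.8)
The plan is to realise $\ca{T}_{\infty}$ as a topological direct sum, exactly along the lines sketched in the introduction, and to read off the parameter $\ep_A$ from the selection of dense sequences inside $\Pi^0_1$ sets. First I would apply Theorem \ref{theorem yiannis 4A7} to $A$ and to its complement (both are \del): this produces $\Pi^0_1$ sets $F_1, F_2 \subseteq \ca{N}$ and recursive functions $\pi_1,\pi_2 : \ca{N} \to \ca{X}$ with $\pi_i$ injective on $F_i$, $\pi_1[F_1]=A$ and $\pi_2[F_2]=\ca{X}\setminus A$. I then transport the Baire metric through the bijections $\pi_i|_{F_i}$, setting $d_A(x,y)=p_\ca{N}(\alpha,\beta)$ for $\pi_1(\alpha)=x,\,\pi_1(\beta)=y$ with $\alpha,\beta\in F_1$, and analogously $d_{A^c}$ on $\ca{X}\setminus A$; by construction $\pi_i|_{F_i}$ is an isometry of $(F_i,p_\ca{N})$ onto the corresponding summand, so each summand is completely metrizable and separable. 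Taking $d_\infty$ to be the direct-sum metric (each summand has $p_\ca{N}$-diameter $\le 1$, and I set the distance between the two summands equal to $1$) makes $(\ca{X},d_\infty)$ a Polish space in which $A$ and $\ca{X}\setminus A$ are clopen. Since $\pi_i|_{F_i}$ is continuous from $(F_i,p_\ca{N})$ into $(\ca{X},d)$ and is an isometry onto its summand, the identity $(\ca{X},d_\infty)\to(\ca{X},d)$ is continuous on each clopen summand and hence continuous; thus $\ca{T}_\infty$ refines $\ca{T}$, and a basic $\ca{T}$-open set, decomposed over the two summands, is $\ca{T}_\infty$-open, so $\ca{T}_\infty$ extends $\ca{T}$. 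Clause (2) is then immediate once the presentation in (1) is arranged so that the indices of the basic neighborhoods contained in $A$ form an $\ep_A$-recursive set.

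The main work is clause (1) together with the assertion $\ep_A \le_T \param$, and this is where I expect the principal obstacle. It suffices to produce, uniformly, an $\ep_A$-recursive presentation of each $(F_i,p_\ca{N})$ with $\ep_A$ recursive in \param, since applying the recursive $\pi_i$ transports it to a presentation of the summand and a fixed recursive scheme assembles the direct sum. Writing $F_i=[T_i]$ for a recursive tree $T_i$ on \om, the relation ``the node $s$ has an extension lying in $F_i$'' is $\exists\alpha\,(\alpha\in[T_i]\wedge s\subset\alpha)$, hence $\Sigma^1_1$, and therefore recursive in \param. I would then select, for each extendible $s$, the leftmost branch $\alpha_s\in F_i$ extending $s$, by choosing at each step the least immediate successor that is again extendible. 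This recursion is uniformly recursive in \param, the points $\{\alpha_s : s\text{ extendible}\}$ are dense in $F_i$ (any $\beta\in F_i$ lies in the neighborhood of its stem, which is extendible, witnessed by $\alpha_s$), and coding the whole array gives a single $\ep_A \le_T \param$ relative to which the distances $p_\ca{N}(\alpha_s,\alpha_{s'})$ — and hence the distances in the summands and in the direct sum — are recursive. This is the step that genuinely requires \param: choosing branches through the possibly ill-founded recursive trees $T_i$ in a uniform effective manner is precisely a Kleene-basis phenomenon, and it is what forces the bound ``recursive in \param'' rather than ``\del''.

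It remains to verify the two preservation clauses, which I would treat symmetrically by splitting $B=(B\cap A)\cup(B\cap(\ca{X}\setminus A))$ along the $\del(\ep_A)$ partition $\{A,\ca{X}\setminus A\}$ and transporting each piece through the relevant map. For (3), if $B$ is $\del(\alpha)$ in $(\ca{X},d)$ then $B\cap A$ is $\del(\alpha)$; since $\pi_1$ is recursive, $\pi_1^{-1}[B\cap A]$ is a $\del(\alpha)$ subset of $F_1$, and transporting it through the isometry $\pi_1|_{F_1}$ — both spaces being $\ep_A$-recursively presented — makes $B\cap A$ a $\del(\ep_A,\alpha)$ subset of $(A,d_A)$, hence of $(\ca{X},d_\infty)$ because $A$ is $\Delta^0_1(\ep_A)$ there; the same argument applies to $B\cap(\ca{X}\setminus A)$, and the union is $\del(\ep_A,\alpha)$. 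For (4) I would run this in reverse: from $B$ being $\del(\ep_A,\alpha)$ in $(\ca{X},d_\infty)$ and $A$ being $\Delta^0_1(\ep_A)$ one gets $B\cap A$ $\del(\ep_A,\alpha)$ in $(A,d_A)$, transports it through the isometry to a $\del(\ep_A,\alpha)$ subset $C$ of $F_1$, and then pushes it forward by $\pi_1$.

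Here the one non-formal ingredient is that a recursive injection maps \del \ sets to \del \ sets — the effective Lusin--Suslin phenomenon underlying Theorem \ref{theorem yiannis 4A7}. Granting this, $C$ is a $\del(\ep_A,\alpha)$ subset of \ca{N} \ (as $F_1$ is $\Pi^0_1$), and $\pi_1$ is recursive and injective on $F_1\supseteq C$, so $B\cap A=\pi_1[C]$ is $\del(\ep_A,\alpha)$ in $(\ca{X},d)$; likewise for the other piece, whence $B$ is $\del(\ep_A,\alpha)$ in $(\ca{X},d)$. Thus the two preservation clauses are mirror images of each other, one using that preimages of \del \ sets under recursive maps are \del, the other that injective recursive images of \del \ sets are \del, with the isometries $\pi_i|_{F_i}$ providing the bridge between the original space and the summands in both directions.
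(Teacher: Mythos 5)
Your proposal is correct and is essentially the paper's own proof: the same appeal to Theorem \ref{theorem yiannis 4A7} for $A$ and its complement, the same transported metrics and direct-sum construction, the same leftmost-branch selection through the extendible nodes (extendibility of a node being a \sig \ relation on \om, hence recursive in \param), and the same push/pull of \del \ sets through the recursive injections $\pi_i$ for clauses (3) and (4). The one step you assert rather than verify --- and where the paper spends most of its technical effort --- is that the distance relations of the presentation are \emph{recursive}, not merely semirecursive, in the coded array: this requires that equality $\alpha_s = \alpha_{s'}$ be decidable from $\ep_A$, which holds precisely because of the leftmost structure ($\alpha_s = \alpha_{s'}$ iff one of $s, s'$ extends the other and the longer node is an initial segment of the branch through the shorter), and without this observation the distance relations are only $\Delta^0_2$ in the array, a bound which does not keep $\ep_A$ recursive in \param.
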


\begin{proof}
From Theorem \ref{theorem yiannis 4A7} there are $\Pi^0_1$ sets $F_1, F_2 \subseteq \ca{N}$ and recursive functions $\pi_1, \pi_2 : \ca{N} \to \ca{X}$ such that $\pi_i$ is one-to-one on $F_i$, $i=1,2$ and $A = \pi_1[F_1]$, $A^c =
\pi_2[F_2]$; where $A^c$ stands for the complement of $A$ in
\ca{X}. We may assume that $A \neq \emptyset, \ca{X}$ for
otherwise the result is immediate. So $F_1, F_2 \neq \emptyset$.

Now we define a distance function on $A$ such that the
function $\pi_1$ becomes an isometry. Let $p_{\ca{N}}$
be the usual distance function on the Baire space \ca{N}, $x,y \in
A$ and $\alpha, \beta$ be the unique members of $F_1$ such that
$x= \pi_1(\alpha), y= \pi_1(\beta)$. Put
$$
d_A(x,y) = d_A(\pi_1(\alpha),\pi_1(\beta)) =
p_{\ca{N}}(\alpha,\beta)
$$
Similarly one defines the distance function $d_{A^c}$ on the complement of $A$ so that $\pi_2$ becomes an isometry. Let $p_{F_i}$ be the restriction of the distance function $p_{\ca{N}}$ on $F_i \times F_i$, $i=1,2$. Since the sets $F_i$ are closed the metric spaces $(F_i,p_{F_i})$ are
separable and complete, $i=1,2$. Also since the functions $\pi_i
\upharpoonright F_i$ are isometries we have that the spaces
$(A,d_A)$, $(A^c,d_{A^c})$ are separable and complete as well.

Now we are going to define $\ep_1, \ep_2 \in \ca{N}$ in which
the metric spaces $(A,d_A)$, $(A^c,d_{A^c})$ admit a recursive
presentation respectively. Recall the set $\seq \subseteq \om$ of the codes of all finite sequences of \om. For $s \in \seq$ we define
\[
N_s =\set{\alpha \in \ca{N}}{\alpha(i) = (s)_i \ \forall i < lh(s)}.
\]
We adopt the notation $s \ \hat{} \ k$ for the natural number $\langle (s)_0, \dots, (s)_{lh(s)-1}, k \rangle$, where $s \in \seq$ and $k \in \om$. Define
\[
\ep_1(s) = 1 \eq \ \seq(s) \ \& \ F_1 \cap N_s \neq
\emptyset
\]
and $\ep_1(s)=0$ otherwise. Notice that for all $s$
with $\ep_1(s)=1$ there always exists some $k \in \om$ such that
$\ep_1(s \ \hat{} \ k) =1$.

We now define a sequence $(\alpha_s)_{s \in \om}$ which is dense in $F_1$. First for $\ep_1(s) = 1$ define $\alpha_s(i)
= (s)_i$ for all $i < lh(s)$ and for $i \geq lh(s)$
$$
\alpha_s(i) = (\mu k)[\ep_1(\langle \alpha_s(0), \dots,
\alpha_s(i-1),k \rangle ) = 1]
$$
It is clear that $\alpha_s \in N_s$ and that $N_{\langle \alpha_s(0), \dots,
\alpha_s(i-1) \rangle} \cap F_1 \neq \emptyset$ for all $s$ with $\ep_1(s) = 1$ and for all \iin. Since $F_1$ is closed it follows that $\alpha_s \in N_s \cap F_1$ for all $s \in \seq$ such that $N_s \cap F_1 \neq \emptyset$.
Therefore the sequence $(\alpha_s)_{ \{ s : \ \ep_1(s)=1\} }$ is
dense in $F_1$.  Now pick the least natural $s$, call it $s_0$, for which $\ep_1(s) = 1$ and define $\alpha_s = \alpha_{s_0}$ for all $s$, for which $\ep_1(s)=0$.

We will prove that the sequence $(\alpha_s)_{s \in \om}$ is an
$\ep_1$-recursive presentation of $(F_1,p_{F_1})$. First notice
that the relation $P \subseteq \om^3$ defined by $P(s,t,i) \eq \ \alpha_s(i) = \alpha_t(i)$, is recursive in $\ep_1$. To see this notice that the definitions
above can be reformulated so that $s$ becomes a variable i.e., there is
a $\Sigma^0_1(\ep_1)$-recursive function $f : \om \times \om \to
\om$ such that for all $s,i$ we have that $f(s,i) = \alpha_s(i)$.

Now we claim that the relations $Q \subseteq \om \times \om$ and $R \subseteq \om \times \om$ defined by
\begin{eqnarray*}
Q(s,t,i) &\eq& P(s,t,i) \ \& \ (\forall j<i)\neg P(s,t,j)\\
&\eq& i \ \emph{is the least $k$ for which $\alpha_s(k) \neq \alpha_t(k)$}
\end{eqnarray*}
and $R(s,t) \eq \alpha_s = \alpha_t$, are recursive in $\ep_1$. This is clear for $Q$, since $P$ is recursive in $\ep_1$. We prove the claim for $R$. If $\ep_1(s)=\ep_1(t) =1$ then the equality between $\alpha_s$ and $\alpha_t$ can be verified by only finite values; in particular
\begin{eqnarray*}
\alpha_s = \alpha_t &\eq& \ \ \ [ \ s \sqsubseteq t \ \& \
(\forall i)[ lh(s) \leq i < lh(t)] \rightarrow
\alpha_s(i)=\alpha_t(i) \ ]
\\
&& \vee \ [ \ t \sqsubseteq s \ \& \ (\forall i)[ lh(t) \leq i <
lh(s)] \rightarrow \alpha_s(i)=\alpha_t(i) \ ]
\end{eqnarray*}
Similarly if $\ep_1(s) =1$ and $\ep_1(t) = 0$ then $\alpha_s = \alpha_t \eq \alpha_s = \alpha_{s_0}$. If $\ep_1(s) = \ep_1(t) = 0$ then clearly $\alpha_s  = \alpha_{s_0} = \alpha_t$. From these remarks it follows that the relation $R$ is recursive in $\ep_1$. From this it is easy to check that the relation $P_< \subseteq \om^4$ defined by $P_<(s,t,m,k) \eq p_{F1}(\alpha_s,\alpha_t) < \frac{m}{k+1}$, is recursive in $\ep_1$ and similarly for the corresponding relation $P_{\leq}$.\vg{We now compute
\begin{eqnarray*}
P_<(s,t,m,k) &\eq& p_{F1}(\alpha_s,\alpha_t) < \frac{m}{k+1}
\\
&\eq& [ \ R(s,t) \ \& \ m>0 \ ]
\\
&& \ \vee \ (\exists i)[ \ \alpha_s(i) \neq \alpha_t(i) \ \& \
(\forall j<i)[\alpha_s(j)=\alpha_t(j)]
\\
&& \ \& \ k+1 < (i+1) \cdot m \ ]
\\
&\eq& [ \ R(s,t) \ \& \ m>0 \ ]
\\
&& \ \vee (\exists i)[ \ Q(s,t,i) \ \& \ k+1 < (i+1) \cdot m \ ]
\\
&\eq& [ \ R(s,t) \ \& \ m>0 \ ]
\\
&& \ \vee (\forall i)[ \ Q(s,t,i) \rightarrow  k+1 < (i+1) \cdot m \ ]
\end{eqnarray*}
Thus the relation $P_<$ is recursive in $\ep_1$. It clear that the relation $P_\leq$ is recursive in $\ep_1$ as well.}Therefore the metric space $(F_1,p_{F_1})$ is recursively
presented in $\ep_1$. Since $\pi_1$ is an isometry between $(F_1,p_{F_1})$ and $(A,d_A)$ it follows that the sequence $(\pi_1(\alpha_s))_{s \in \om}$ is an $\ep_1$-recursive presentation of $(A,d_A)$. Similarly we define $\ep_2$ for the metric space $(A^c,d_{A^c})$ with the $\ep_2$-recursive presentation $(\pi_1(\alpha_s))_{s \in \om}$. We define $\ep_A = \langle \ep_1, \ep_2 \rangle$ and we check that $\ep_A$ is recursive in \param. To see this consider the \sig \ relations $P_1(s) \eq F_1 \cap N(\ca{X},s) \neq \emptyset \ \ \textrm{and} \ \ P_2(s) \eq F_2 \cap N(\ca{X},s) \neq \emptyset$, and notice that $\ep_A(\langle i,s \rangle) = 1 \eq [ \ i=0 \ \& \ P_1(s) \ ] \ \vee \ [ \ i=1 \ \& \ P_2(s) =1 \ ]$.
\vg{\begin{eqnarray*}
\ep_A(\langle i,s \rangle) = 1 &\eq& [ \ i=0 \ \& \ \ep_1(s) =1 \
] \ \vee \ [ \ i=1 \ \& \ \ep_2(s) =1 \ ]
\\
&\eq& [ \ i=0 \ \& \ P_1(s) \ ] \ \vee \ [ \ i=1 \ \& \ P_2(s) =1
\ ]
\end{eqnarray*}
}
Thus $\ep_A$ is recursive in a \sig \ subset of \om. Since \param \ is a complete \pii \ set it follows that $\ep_A$ is recursive in \param.

We now define the topology $\ca{T}_{\infty}$ on \ca{X} as the
direct sum of $(A,d_A)$ and $(A^c,d_{A^c})$ i.e., $d(x,y) = 2$ if it is not the case $x,y \in A$ or $x,y \in A^c$ and otherwise $d$ coincides with $d_A$ or $d_{A^c}$. As we mentioned in the introduction $(\ca{X},d_{\infty})$ is Polish and recursively presented in $\ep_A$. A suitable dense sequence is $x_{\langle 0,s \rangle} = \pi_1(\alpha_s)$, $x_{\langle 1,s \rangle} = \pi_2(\beta_s)$ and $x_t = \alpha_{s_0}$ in any other case. It is clear that $V$ is $d_{\infty}$-open if and only if $V \cap A$ is $d_A$-open and $V \cap A^c$ is $d_{A^c}$-open for all $V \subseteq \ca{X}$.

The set $A$ is $\Delta^0_1(\ep_A)$ in $(\ca{X},d_{\infty})$. We prove that the topology $\mathcal{T}_{\infty}$ which is generated by $d_{\infty}$ extends $\mathcal{T}$. Let $V$ be in \ca{T}. We will show that $V \cap A$ is $d_A$-open. The proof for $V \cap A^c$ is similar. Let $y = \pi_1(\alpha) \in V \cap A$, with $\alpha \in F_1$. Since $\pi_1: \ca{N} \to (\ca{X},d)$ is continuous there is some $k
\in \om$ such that for all $\beta \in \ca{N}$ with
$p_{\ca{N}}(\alpha,\beta) < \frac{1}{k+1}$ we have that $\pi_1(\beta) \in
V$. We claim that the $d_A$-ball of center $y$ and radius
$\frac{1}{k+1}$ is contained in $V$. Let $z = \pi_1(\beta)$, with $\beta \in F_1$ be such that $d_A(y,z) < \frac{1}{k+1}$. Then $p_{\ca{N}}(\alpha,\beta) =
d_A(\pi_1(\alpha),\pi_1(\beta)) = d_A(y,z) < \frac{1}{k+1}$, hence $z = \pi_1(\beta) \in V$.

Now we deal with the rest of the assertions. It is not hard to see that every recursive function $\pi: \ca{N} \to (\ca{X},d)$ is $\del(\ep_A)$-recursive as a function from $\ca{N}$ to $(\ca{X},d_{\infty})$ (by considering the standard recursive presentation of \ca{N} as an $\ep_A$-recursive presentation).\vg{To see this consider the relation $R^{\pi}$ defined by $R^{\pi}(\alpha,t) \eq \pi(\alpha) \in N((\ca{X},d_{\infty}),t)$. We compute
\begin{eqnarray*}
&&\hspace{-1cm}d_{\infty}(\pi(\alpha),x_{\langle i,s \rangle}) < \frac{k}{m+1} \eq \\
&&  \ \ \ \ [ \ \pi(\alpha) \in A \ \& \ i=0 \ \& \ d_A(\pi(\alpha),\pi_1(\alpha_s)) < \frac{k}{m+1} \ ]\\
&& \ \vee \ [ \ \pi(\alpha) \in A^c \ \& \ i=1 \ \& \ d_{A^c}(\pi(\alpha),\pi_2(\beta_s)) < \frac{k}{m+1} \ ]\\
&& \ \vee \ [ \ [[\pi(\alpha) \in A \ \& \ i=1] \vee [\pi(\alpha) \in A^c \ \& \ i=0] ] \ \& \ 1 < \frac{k}{m+1} \ ].
\end{eqnarray*}
}
It follows from 4A.7 and 4D.7 of \cite{yiannis_dst} that every \del \ subset of $(\ca{X},d)$ is a $\del(\ep_A)$ subset of $(\ca{X},d_\infty)$. This settles assertion (3).

About (4) let $B$ be a $\del(\ep_A)$ subset of
$(\ca{X},d_{\infty})$. The set $\pi_1^{-1}[B \cap A]
\subseteq F_1 \subseteq \ca{N}$ is in $\del(\ep_A)$ since
$\pi_1$ is recursive as a
function from \ca{N} to $(\ca{X},d)$ and therefore $\del(\ep_A)$-recursive as a function from \ca{N} to $(\ca{X},d_{\infty})$. Since $\pi_1$ is
one-to-one on $\pi_1^{-1}[B \cap A]$ again from 4D.7 of
\cite{yiannis_dst} we have that $B \cap A = \pi_1[\pi_1^{-1}[B \cap A]]$
is a $\del(\ep_A)$ subset of $(\ca{X},d)$. Similarly
we prove that the set $B \cap A^c$ is a $\del(\ep_A)$
subset of $(\ca{X},d)$ as well. Therefore the set $B = (B
\cap A) \cup (B \cap A^c)$ is a $\del(\ep_A)$ subset of
$(\ca{X},d)$.
\end{proof}

\section{Computing the parameter.}

As we have seen one can choose the parameter \ep \ of Theorem \ref{theorem main} to be recursive in \param. Since we are dealing mostly with \del \ sets a natural question to ask is whether we can choose a hyperarithmetical such \ep. We will see that the latter is not necessarily true even if we start with a $\Sigma^0_1$ set which we want to turn into $\Delta^0_1$. Nevertheless we will show that in some cases it is possible to choose a hyperarithmetical such \ep, c.f. Theorem \ref{theorem countable sets admit good parameter in del}. Moreover we will see that, under some specific assumptions about the set that we start with and our underlying space, we can characterize the case where the choice of a hyperarithmetical such \ep is possible, c.f. Theorem \ref{theorem G_delta and complement G_delta}.

\begin{definition}

\normalfont

\label{definition suitable parameter}

Suppose that $(\ca{X},\ca{T})$ is a recursively presented Polish space, $d$ is a suitable distance function and that $A$ is a \del \ subset of \ca{X}.

(1) We say that $\ep \in \ca{N}$ is a \emph{good parameter for $A$} if all conclusions of Theorem \ref{theorem main} are satisfied by taking $\ep_A$ as $\ep$. The latter means that there is a Polish topology $\ca{T}_{\infty}$ on \ca{X} which extends \ca{T}; the space $(\ca{X},\ca{T}_{\infty})$ is $\ep$-recursively presented; the set $A$ is $\Delta^0_1$ in $(\ca{X},d_{\infty})$, where $d_{\infty}$ is a suitable distance function; every $B$ which is a $\del(\alpha)$ subset of $(\ca{X},d)$ is a $\del(\ep,\alpha)$ subset of $(\ca{X},d_{\infty})$ and every $B$ which is a $\del(\ep,\alpha)$ subset of $(\ca{X},d_{\infty})$ is a $\del(\alpha,\ep)$ subset of $(\ca{X})$.

(2) We say that \emph{the class \del \ is dense in $A$} if whenever some $V \in \ca{T}$ intersects $A$ then $V$ intersects $A$ in \del \ point of $(\ca{X},d)$. \end{definition}

The next proposition gives a necessary condition for a set in \del \ in order to admit a good parameter in \del.

\begin{proposition}

\label{proposition necessary condition for del}

If a \del \ subset $A$ of a recursively presented Polish space $(\ca{X},\ca{T})$ admits a good parameter in \del \ then the class \del \ is dense both in $A$ and in the complement $\ca{X} \setminus A$ with respect to \ca{T}.

\end{proposition}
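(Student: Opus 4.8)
The plan is to work entirely with the extended presentation supplied by the good parameter and to locate hyperarithmetical points inside the relevant open sets. Fix a good parameter $\ep \in \del$ for $A$, and let $\ca{T}_\infty$, $d_\infty$ be the topology and distance witnessing Definition \ref{definition suitable parameter}(1); thus $(\ca{X},d_\infty)$ is $\ep$-recursively presented, $\ca{T}_\infty$ extends $\ca{T}$, and $A$ is $\Delta^0_1(\ep)$ — in particular clopen — in $(\ca{X},d_\infty)$. Since $\ep$ is itself in \del, relativization to $\ep$ does not enlarge the hyperarithmetical pointclass, so $\del(\ep,\alpha) = \del(\alpha)$ for every $\alpha$; I would record this at the outset and use it throughout.

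First I would prove density in $A$. Let $V \in \ca{T}$ with $V \cap A \neq \emptyset$. Because $\ca{T}_\infty \supseteq \ca{T}$, the set $V$ is $\ca{T}_\infty$-open, and since $A$ is $\ca{T}_\infty$-clopen, $V \cap A$ is a nonempty $\ca{T}_\infty$-open set. The $\ep$-recursive presentation of $(\ca{X},d_\infty)$ is dense, so $V \cap A$ contains one of its points, say $y$. For such a presentation point the relation $s \mapsto [\,y \in N((\ca{X},d_\infty),s)\,]$ is $\ep$-recursive by the defining property of an $\ep$-recursive presentation, so $y$ is a recursive-in-$\ep$, hence (as $\del(\ep)=\del$) a \del\ point of $(\ca{X},d_\infty)$.

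The crux is to transfer the hyperarithmeticity of $y$ from the presentation $d_\infty$ to the original presentation $d$: being a \del\ point is defined through the neighborhood system, and the two systems differ. I would bridge this with the standard characterization that, in any ($\ep$-)recursively presented Polish space, a point $x$ is $\del(\ep)$ precisely when the singleton $\{x\}$ is a $\del(\ep)$ subset of the space. The key subtlety here is that one must use \del\ singletons rather than merely \pii\ ones, since there exist \pii\ singletons whose unique element is not hyperarithmetical, so a \pii-singleton characterization would be false. Granting this, $\{y\}$ is a $\del(\ep)$ subset of $(\ca{X},d_\infty)$; by the property corresponding to conclusion \tu{(4)} of Theorem \ref{theorem main} (built into the notion of good parameter), $\{y\}$ is then a $\del(\ep)$ subset of $(\ca{X},d)$; and applying the characterization once more, now in $(\ca{X},d)$, shows that $y$ is a $\del(\ep)=\del$ point of $(\ca{X},d)$. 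Since $y \in V \cap A$, this exhibits a \del\ point of $(\ca{X},d)$ in $V \cap A$, proving that \del\ is dense in $A$.

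Finally, density in the complement follows by the identical argument with $A^c = \ca{X}\setminus A$ in place of $A$: the complement of a $\Delta^0_1(\ep)$ set is again $\Delta^0_1(\ep)$, so $A^c$ is also $\ca{T}_\infty$-clopen, and any $V \in \ca{T}$ meeting $A^c$ meets it in a presentation point of $(\ca{X},d_\infty)$, which transfers to a \del\ point of $(\ca{X},d)$ exactly as above. I expect the main obstacle to be this transfer step, namely verifying the \del-singleton characterization of \del\ points and its compatibility with conclusion \tu{(4)}; the remaining ingredients are the density observation and the routine bookkeeping that $\del(\ep,\alpha)=\del(\alpha)$ when $\ep \in \del$.
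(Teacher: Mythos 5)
Your proof is correct and follows essentially the same route as the paper's: pass to the $\ca{T}_\infty$-open set $V \cap A$, pick a presentation point $y$ recursive in $\ep$, note $\{y\}$ is $\del(\ep)$ in $(\ca{X},d_\infty)$, transfer it to $(\ca{X},d)$ via property \tu{(4)} of the good parameter, and conclude $y$ is a \del\ point since $\ep \in \del$. Your explicit justification of the singleton characterization and the warning about \pii-singletons is a useful elaboration of a step the paper leaves implicit, but it is the same argument.
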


\begin{proof}
Let \ep \ be a good parameter for $A$ in \del \ and $\ca{T}_{\infty}$ be the corresponding extension of the topology \ca{T}. Suppose that $V \in \ca{T}$ intersects $A$. Since $\ca{T}_{\infty}$ extends $\ca{T}$ we have that $V \in \ca{T}_{\infty}$. From the choice of $\ep$ and $\ca{T}_{\infty}$ we also have that $A \in \ca{T}_{\infty}$. Thus the set $V \cap A$ is a non-empty $\ca{T}_{\infty}$-open set. Since $(\ca{X},\ca{T}_{\infty})$ is recursively presented in \ep, the set $V \cap A$ contains a point say $x_0$ which is recursive in $\ep$. It follows that $\{x_0\}$ is a $\del(\ep)$ subset of $(\ca{X},d_{\infty})$ and therefore $\{x_0\}$ is a $\del(\ep)$ subset of $(\ca{X},d)$. Thus $x$ is $\del(\ep)$ point of $(\ca{X},d)$. Since $\ep \in \del$ we have that $x$ is a \del \ point of $(\ca{X},d)$.

The proof for $\ca{X} \setminus A$ is similar.
\end{proof}

\begin{remark}

\label{remark the parameter may not be in del}

\normalfont

It follows from the previous proposition that a \del \ set $A$ may not admit a good parameter in \del. To see this take $\ca{X} = \ca{N}$ and $A$ any non-empty $\Pi^0_1$ set with no \del \ members.
\end{remark}

Our next goal is to find sufficient conditions, under which a given \del \ set admits a good parameter in \del. We will work inside a special category of spaces.

\begin{definition}

\label{definition recursively zero-dimensional}

\normalfont

A recursively presented Polish space $(\ca{X},\ca{T})$ is \emph{recursively zero-dimensional} if there is a distance function $d$ on \ca{X}, which generates the topology \ca{T} witnesses that $(\ca{X},\ca{T})$ is recursively presented with recursive presentation the sequence $(r_i)_{\iin}$ and moreover the relation $I \subseteq \ca{X} \times \om \times \om$ defined by
    \[
    I(x,i,s) \eq d(x,r_i) < q_s
    \]
is recursive. By replacing ``recursive" with ``$\ep$-recursive" one defines the notion of an \emph{$\ep$-recursively zero-dimensional space}.

\end{definition}

It is well-known that every zero-dimensional Polish space is topologically isomorphic to a closed subset of the Baire space, c.f. 7.8 \cite{kechris_classical_dst}. The next lemma is the effective analogue of this statement.

\begin{lemma}

\label{lemma zero-dimensional is embedded}

For every recursively zero-dimensional Polish space $\ca{X}$ there is a recursive injection $f: \ca{X} \to \ca{N}$ such that the set $\ca{Y}:=f[\ca{X}]$ is in $\Pi^0_1(\ep)$ for some $\ep \in \Delta^0_2$. Moreover the inverse function $f^{-1}: \ca{Y} \to \ca{X}$ is computed by a semi-recursive subset of $\ca{N} \times \om \times \om$ on \ca{Y} i.e., there is a semi-recursive $R \subseteq \ca{N} \times \om \times \om$ such that for all $\alpha \in \ca{Y}$ we have that $d(f^{-1}(\alpha),r_i) < q_s \eq R(\alpha,i,s)$. In particular the inverse function $f^{-1}$ is continuous.
\end{lemma}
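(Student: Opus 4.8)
The plan is to effectivize the classical argument that embeds a zero-dimensional Polish space as a closed subset of \ca{N} through a Lusin scheme of clopen sets with vanishing diameters. First I would construct, uniformly recursively in its indices, a clopen partition scheme $(C_s)_{s \in \seq}$ on \ca{X} with $C_{\langle\rangle} = \ca{X}$, $C_{s} = \bigsqcup_{k \in \om} C_{s \ \hat{} \ k}$ and $\mathrm{diam}(C_s) \leq 2^{-lh(s)}$. The point where recursive zero-dimensionality enters is that the relation $I(x,i,s) \iff d(x,r_i) < q_s$ is recursive, so each basic ball $B(r_i,q_s)$ is uniformly clopen. At level $j$ I would cover \ca{X} by the balls $B(r_i, 2^{-(j+2)})$ and disjointify them, setting $D^{(j)}_k = B(r_k, 2^{-(j+2)}) \setminus \bigcup_{i < k} B(r_i, 2^{-(j+2)})$; each $D^{(j)}_k$ is clopen of diameter $\leq 2^{-(j+1)}$, the family $\{D^{(j)}_k\}_k$ partitions \ca{X}, and ``$x \in D^{(j)}_k$'' is a finite boolean combination of instances of $I$, hence recursive in $(x,j,k)$. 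Putting $C_s = \bigcap_{j < lh(s)} D^{(j)}_{(s)_j}$ then yields a scheme for which ``$x \in C_s$'' is recursive uniformly in $(x,s)$.

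Next I would define $f : \ca{X} \to \ca{N}$ by letting $f(x)$ be the unique $\alpha$ with $x \in C_{\alpha \upharpoonright n}$ for every \n, where $\alpha \upharpoonright n$ denotes the code $\langle \alpha(0), \dots, \alpha(n-1)\rangle$. Vanishing diameters give injectivity, and since $f(x) \in N_t \iff x \in C_t$ the function $f$ is recursive. For the image, completeness of $d$ together with the fact that the $C_{\alpha \upharpoonright n}$ are nested nonempty closed sets of diameter tending to $0$ shows $\ca{Y} = f[\ca{X}] = \set{\alpha \in \ca{N}}{\forall n \ C_{\alpha \upharpoonright n} \neq \emptyset}$. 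Now ``$C_s \neq \emptyset$'' is the $\Sigma^0_1$ relation $\exists i \ (r_i \in C_s)$, so I would take $\ep \in \ca{N}$ to be its characteristic function; being the characteristic function of a $\Sigma^0_1$ subset of \om, $\ep$ lies in $\Delta^0_2$. With this $\ep$ the matrix ``$C_{\alpha \upharpoonright n} \neq \emptyset$'' becomes recursive in $(\ep,\alpha,n)$, whence $\ca{Y} \in \Pi^0_1(\ep)$.

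Finally, for the semirecursive computation of $f^{-1}$, the crucial observation is that the diameter bound replaces an unbounded search for exact distances by a rational one. For $\alpha \in \ca{Y}$ with $x = f^{-1}(\alpha)$, any $r_j \in C_{\alpha \upharpoonright n}$ satisfies $d(x,r_j) \leq 2^{-n}$, and I claim $d(x,r_i) < q_s \iff \exists n \, \exists j \, [\, r_j \in C_{\alpha \upharpoonright n} \ \& \ d(r_j,r_i) < q_s - 2^{-n}\,]$: right to left is the triangle inequality, while left to right one picks $n$ with $2 \cdot 2^{-n} < q_s - d(x,r_i)$ and any $r_j \in C_{\alpha \upharpoonright n}$, which is nonempty. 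Defining $R$ by the right-hand side gives a semirecursive relation (``$r_j \in C_{\alpha \upharpoonright n}$'' is recursive in $(\alpha,j,n)$ and ``$d(r_j,r_i) < q_s - 2^{-n}$'' is a recursive comparison with an explicit rational) that computes $f^{-1}$ on \ca{Y}; continuity of $f^{-1}$ is then immediate.

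The main obstacle I anticipate lies in two places. First, obtaining a genuinely closed (rather than merely $G_\delta$) image forces the vanishing-diameter scheme and then confines the one non-recursive ingredient, the $\Sigma^0_1$ emptiness test, into the $\Delta^0_2$ parameter \ep; a naive coding of \ca{X} by characteristic functions of all balls would only yield a $G_\delta$, hence $\Pi^0_2$, image. Second, upgrading $f^{-1}$ from a merely continuous map to one computed by a semirecursive $R$ is exactly what the diameter trick is designed to achieve, turning a prima facie $\Sigma^0_2$ condition into a $\Sigma^0_1$ one. The remaining verifications — uniform recursiveness of the disjointification and the partition and nesting identities — are routine.
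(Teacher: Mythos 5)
Your proposal is correct and follows the same overall architecture as the paper's proof: a recursive clopen scheme of vanishing diameter, the injection $f$ read off from the scheme, the parameter $\ep$ taken as the characteristic function of the $\Sigma^0_1$ emptiness relation (hence $\Delta^0_2$), the characterization $\alpha \in f[\ca{X}] \Longleftrightarrow (\forall n)[C_{\alpha \upharpoonright n} \neq \emptyset]$ giving $\Pi^0_1(\ep)$, and the identical triangle-inequality trick $d(x,r_i) < q_s \Longleftrightarrow (\exists n,j)[r_j \in C_{\alpha \upharpoonright n} \ \& \ d(r_j,r_i) < q_s - 2^{-n}]$ for computing $f^{-1}$ semirecursively. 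The one genuine difference is how the scheme is obtained. The paper builds its scheme by recursion along the tree of finite sequences, refining each piece $B_s$ by balls centered at dense points lying inside $B_s$, and needs two applications of Kleene's Recursion Theorem to justify that the resulting relation is recursive. You instead disjointify a fixed ball cover of radius $2^{-(j+2)}$ separately at each level $j$, obtaining partitions $(D^{(j)}_k)_k$ whose membership relations are explicit boolean combinations of the relation $I$ of Definition \ref{definition recursively zero-dimensional}, and set $C_s = \bigcap_{j < lh(s)} D^{(j)}_{(s)_j}$. This makes the uniform recursiveness of ``$x \in C_s$'' a matter of bounded quantification over a recursive relation and eliminates the Recursion Theorem entirely; it is a genuine (if modest) simplification, at no cost to the rest of the argument, since all that is used downstream is properties (1)--(4) of the scheme.

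One small repair is needed. Your diameter bound $\mathrm{diam}(C_s) \leq 2^{-lh(s)}$ is only guaranteed for $lh(s) \geq 1$ (via $C_s \subseteq D^{(lh(s)-1)}_{(s)_{lh(s)-1}}$); for the empty sequence it asserts $\mathrm{diam}(\ca{X}) \leq 1$, which you have not arranged. Consequently, in the right-to-left direction of your equivalence for $f^{-1}$, the instance $n = 0$ is unsound in a space of diameter greater than $1$: one could have $d(r_j,r_i) < q_s - 1$ while $d(x,r_j)$ is large. Either normalize the metric by replacing $d$ with $d(1+d)^{-1}$ at the outset, as the paper does (noting that this replacement preserves recursive zero-dimensionality, since $d(1+d)^{-1}(x,r_i) < q_s \Longleftrightarrow d(x,r_i) < q_s(1-q_s)^{-1}$ for $q_s<1$, a recursive transformation of codes), or simply restrict the existential quantifier in the definition of $R$ to $n \geq 1$. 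With either fix the argument is complete.
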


\begin{proof}
Fix a suitable distance function $d$ on \ca{X}. By replacing $d$ with $d(1+d)^{-1}$ we may assume that $d \leq 1$. First we claim that there is a recursive relation $A \subseteq \ca{X} \times \seq$ such that (1) $A_0 = \ca{X}$, (2) $A_{s} = \cup_{\n} A_{s \ \hat{} \ n}$, (3) $A_{s \ \hat{} \ n} \cap A_{s \ \hat{} \ m} = \emptyset$ for all $n \neq m$, (4) each $A_{s}$ has diameter less than $2^{-lh(s)}$, where $A_{s} = \set{x \in \ca{X}}{A(x,s)}$. Notice that each $A_{s}$ is a clopen set and that we do not exclude the case $A_{s} = \emptyset$.

We now prove this claim. For all $i,n \in \om$ we denote by $N(i,n)$ the open $d$-ball with center $r_i$ and radius $(n+1)^{-1}$, so that the relation $I \subseteq \ca{X} \times \om \times \om$ defined by $I(x,i,n) \eq x \in N(i,n),$ is recursive. Using Kleene's Recursion Theorem we obtain a recursive function $b: \ca{X} \times \seq \to 2$ such that $b(x,0) = 1$ for all $x \in \ca{X}$ and
\[
b(x,s \ \hat{} \ k) = 1 \eq b(x,s) = 1 \ \& \ x \in N(k, 2^{lh(s)+2}) \ \& \ b(r_k,s) = 1,
\]
for all $x \in \ca{X}$, $s \in \seq$ and $k \in \om$. So if we define $B_{s} = \set{x \in \ca{X}}{b(x,s) = 1}$ we have that $B_0 = \ca{X}$ and $B_{s \ \hat{} \ k} = B_{s} \cap N(k,2^{lh(s)+2})$ if $r_k \in B_s$ and $B_{s \ \hat{} \ k} = \emptyset$ otherwise, for all $s \in \seq$ and $k \in \om$. It is clear that the diameter of $B_{s}$ is less than $2^{lh(s)}$ and that it is a clopen set. It is also holds that $B_{s} = \cup_{\n} B_{s \ \hat{} \ n}$. To see this let $x \in B_s$; since $B_s$ is open we can choose $r_k \in B_s$ with $d(x,r_k) < 2^{-(lh(s)+2)}$. Then $x \in B_{s} \cap N(k,2^{lh(s)+2}) = B_{s \ \hat{} \ k}$. With one more application of the Recursion Theorem we get a recursive set $A \subseteq \ca{X} \times \seq$ such that $A_0 = \ca{X}$ and
\[
A_{s \ \hat{} \ k} = (B_{s \ \hat{} \ k} \setminus \cup_{i < k} B_{s \ \hat{} \ i}) \cap A_s
\]
for all $s \in \seq$ and $k \in \om$, where $A_s$ is the $s$th-section of $A$. It is clear that conditions (1)-(4) are satisfied for this family $(A_{s})_{s \in \seq}$.

Having proved our claim we define $f: \ca{X} \to \ca{N}$ as follows
\begin{eqnarray*}
f(x)(n) &=&  \emph{the unique $i$ for which there is (a unique) $s \in \seq$ such that}\\
        && \emph{$x \in A_{s \ \hat{} \ i}$ and $lh(s) = n$,}
\end{eqnarray*}
for all $x \in \ca{X}$ and \n. It is easy to check that $f(x)\upharpoonright n$ is the unique $s \in \seq$ of length $n$ such that $x \in A_s$ and that $f(x)(n)$ is the unique $i$ such that $x \in A_{f(x)\upharpoonright n \ \hat{} \ (i)}$. Clearly the function $f$ is recursive and injective.

We define now $\ep(s) = 1$ exactly when there exists $i$ such that $r_i \in A_s$ and $0$ otherwise. It is clear that $\ep \in \Delta^0_2$. Since each $A_{s}$ is clopen we have that $\ep(s) = 1$ exactly when $A_{s} \neq \emptyset$. Moreover one can verify that
\begin{eqnarray*}
\alpha \in f[\ca{X}] \eq (\forall n)[A_{\alpha \upharpoonright n} \neq \emptyset]
                     \eq (\forall n)[\ep(\alpha \upharpoonright n) = 1].
\end{eqnarray*}
So the set $\ca{Y}:=f[\ca{X}]$ is a $\Pi^0_1(\ep)$ subset of \ca{N}. Finally we prove the assertion about $f^{-1}$. Suppose that $\alpha \in \ca{Y}$ and $x = f^{-1}(\alpha)$. We claim that
\[
d(x,r_i) < q_s \eq (\exists n,j)[r_j \in A_{\alpha \upharpoonright n} \ \& \ d(r_j,r_i) < q_s - 2^{-n}].
\]
For the left-to-right-hand direction pick some $n,j \in \om$ such that $2^{-n+1} < q_s - d(x,r_i)$ and $r_j \in A_{\alpha \upharpoonright n}$. (The set $A_{\alpha \upharpoonright n}$ is non-empty since $\alpha \in \ca{Y}$.) Since both $r_j$ and $x = f^{-1}(\alpha)$ are members of $A_{\alpha \upharpoonright n}$ we have that $d(r_j,x) < 2^{-n}$. It holds then
$d(r_j,r_i) \leq d(r_j,x) + d(x,r_i) < \frac{1}{2^n} + d(x,r_i) < q_s - \frac{1}{2^n}.$
For the right-to-left-hand direction using that $x = f^{-1}(\alpha) \in A_{\alpha \upharpoonright n}$ for all $n$ we compute $d(x,r_i) \leq d(x,r_j) + d(r_j,r_i) < 2^{-n} + q_s - 2^{-n} = q_s$. Thus the equivalence is proved. Take now $R(x,i,s) \eq (\exists n,j)[r_j \in A_{\alpha \upharpoonright n} \ \& \ d(r_j,r_i) < q_s - \frac{1}{2^n}]$ and we are done.
\end{proof}

It would be interesting to see if the parameter $\ep$ in the previous proof can be chosen to be recursive.

\begin{remark}

\label{remark about the embedding}

\normalfont

Suppose that \ca{X} is a recursively-zero-dimensional Polish space and that \ca{Y} and $f: \ca{X} \to \ca{Y}$ are as in the previous lemma.

(1) Consider a recursive presentation for \ca{X}, say $(x_i)_{\iin}$ and define
$R(i,j) \eq x_i \neq x_j$. The metric space $(\ca{Y},p_\ca{N})$ admits a presentation which is recursive in $R$. To see this define $y_i = f(x_i)$ for all $i \in \om$ and notice that the sequence $(y_i)_{\iin}$ is dense in \ca{Y}. Moreover for all $i,j$ with $x_i \neq x_j$ there is exactly one triple $(u,n,m)$, where $u$ is a finite sequence of naturals and $n\neq m \in \om$, such that $x_i, x_j \in A_u$, $x_i \in A_{u \ \hat{} \ (n)}$ and $x_j \in A_{u \ \hat{} \ (m)}$. So the least natural $k$ for which $f(x_i)(k) \neq f(x_j)(k)$ is exactly the length of the finite sequence $u$. We define $g(i,j)=$ \textit{the length of the previous $u$}, if $R(i,j)$ and $g(i,j) = 0$, otherwise. The function $g$ is $R$-recursive and from the definition of $p_\ca{N}$ it is clear $p_\ca{N}(y_i,y_j) = p_\ca{N}(f(x_i),f(x_j)) = (g(i,j)+1)^{-1}$ for $x_i \neq x_j$.

(2) Consider the parameter $\ep \in \Delta^0_2$ of the proof of the previous lemma, so that \ca{Y} is in $\Pi^0_1(\ep)$. For every $A \subseteq \ca{X}$, if $A$ is in $\Pi^0_1$ the set $f[A]$ is a $\Pi^0_1(\ep)$ subset of \ca{N}, and if $A$ is in $\Sigma^0_{n+1}$ the set $f[A]$ is a $\Sigma^0_{n+1}$ subset of \ca{N} for all $n \geq 1$. To see this consider for all $k \in \om$ the space $\ca{X} \times \om^k$ (for $k=0$ we just mean the space \ca{X}) and the function $\tilde{f} : \ca{X} \times \om^k \to \ca{N} \times \om^k$ defined by $\tilde{f}(x,\vec{z}) = (f(x),\vec{z})$. Using that $\ca{Y}$ is a $\Pi^0_1(\ep)$ subset of \ca{N} one can show that for every $k \in \om$ and for every $A \subseteq \ca{X} \times \om^k$ in $\Pi^0_1$ the set $\tilde{f}[A]$ is a $\Pi^0_1(\ep)$ subset of $\ca{N} \times \om^k$. From this and the fact that $\ep \in \Delta^0_2$ one can show with induction on $n \geq 1$ that for all $k \in \om$ and all $A \subseteq \ca{X} \times \om^k$ in $\Sigma^0_{n+1}$ the set $\tilde{f}[A]$ is a $\Sigma^0_{n+1}$ subset of $\ca{N} \times \om^k$.

\end{remark}

The proof of Theorem \ref{theorem yiannis 4A7} (c.f. \cite{yiannis_dst}) shows that every \del \ subset of $\ca{N}$ is in fact the injective image under the projection of a $\Pi^0_1$ subset of $\ca{N} \times \ca{N}$. Thus in the case of \del \ subsets of \ca{N} the function $\pi$ of Theorem \ref{theorem yiannis 4A7} can be chosen to be an open mapping i.e., to carry open sets to open sets. This however does not necessarily imply that the inverse function $\pi^{-1}: A \to F$ is continuous. (We would need to know that $\pi$ is injective on the whole space \ca{N} in order to say this.) For reasons that will become clear latter on, the continuity of the inverse function will be necessary for our purposes. This property though poses a limitation on the \del \ set we start with.

\begin{remark}

\label{remark Gdelta is necessary so that the inverse function is continuous}

\normalfont

Suppose that $A \subseteq \ca{N}$ is in \del \ and that there is a $\Pi^0_1$ subset of \ca{N}, say $F$, and a recursive function $\pi: F \to \ca{N}$ such that $\pi[F] = A$, $\pi$ is injective on $F$ and the inverse function $\pi^{-1}: A \to F$ is continuous. Then $A$ is a $G_{\delta}$ subset of \ca{N}.

To see why the latter holds define the distance function $d$ on $A$ by $d(\alpha,\beta) = p_{\ca{N}}(\pi^{-1}(\alpha),\pi^{-1}(\beta))$ for all $\alpha, \beta \in A$. Since both functions $\pi$ and $\pi^{-1}$ are continuous, the topology on $A$ induced by $d$ is exactly \set{V \cap A}{V \ \textrm{open in} \ \ca{N}} i.e., the relative topology of \ca{N} on $A$. Moreover since $F$ is closed it follows that $(A,d)$ is Cauchy-complete. Therefore the set $A$ with the relative topology induced by \ca{N} is a Polish space. It follows from Theorem 3.11 in \cite{kechris_classical_dst} that $A$ is a $G_\delta$ set.
\end{remark}

What is perhaps more important is that the previous remark has an inverse.

\begin{lemma}

\label{lemma pi02 has continuous inverse}

For every $A \subseteq \ca{N}$ in $\Pi^0_2$ there is an $F \subseteq \ca{N} \times \ca{N}$ in $\Pi^0_1$ such that
\begin{eqnarray*}
A(\alpha) &\eq& (\exists \beta)F(\alpha,\beta)\\
          &\eq& (\exists \ \textrm{unique} \ \beta)F(\alpha,\beta)
\end{eqnarray*}
for all $\alpha \in \ca{N}$, and the function $pr^{-1}: A \to F: pr^{-1}(\alpha)= (\alpha,\beta)$, where $\beta$ is such that $F(\alpha,\beta)$, is continuous.
\vg{Similarly if $(\ca{X},\ca{T})$ is recursively zero-dimensional and $B$ is a $\Pi^0_2$ subset of \ca{X}, there is an $\ep \in \Delta^0_2$ and an $L \subseteq \ca{X} \times \ca{N}$ in $\Pi^0_1(\ep)$ satisfying all previous conclusions with $B$ and $L$ in the place of $A$ and $F$ respectively.}
\end{lemma}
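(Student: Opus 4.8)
The plan is to reduce the problem to the canonical $\Pi^0_2$ normal form and then read off the second coordinate as a least-witness function. First I would write
\[
A(\alpha) \eq (\forall n)(\exists m)\, R_n(\alpha \upharpoonright m),
\]
where $\alpha \upharpoonright m$ denotes the code in \seq\ of $(\alpha(0),\dots,\alpha(m-1))$ and $\set{(n,s)}{R_n(s)}$ is a recursive subset of $\om \times \seq$. This is just the assertion that $A$ is the intersection of a uniformly $\Sigma^0_1$ sequence of open sets $U_n$, each written as $U_n = \set{\alpha}{(\exists m)\, R_n(\alpha \upharpoonright m)}$; by replacing $R_n$ with its upward closure under the initial-segment relation one may assume $R_n$ is monotone, although this is not strictly needed below.

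Next I would define $F \subseteq \ca{N} \times \ca{N}$ by declaring $(\alpha,\beta) \in F$ exactly when, for every $n$, the value $\beta(n)$ is the \emph{least} $m$ with $R_n(\alpha \upharpoonright m)$; formally
\[
F(\alpha,\beta) \eq (\forall n)\bigl[\, R_n(\alpha \upharpoonright \beta(n)) \ \& \ (\forall j < \beta(n))\,\neg R_n(\alpha \upharpoonright j)\,\bigr].
\]
The bracketed matrix is a recursive relation of $(\alpha,\beta,n)$ which inspects only finitely many values of $\alpha$ (those below $\beta(n)$) together with the single value $\beta(n)$, so it is clopen in $(\alpha,\beta)$; quantifying universally over $n$ therefore makes $F$ a $\Pi^0_1$ set, as required.

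Now I would verify the three assertions. For the projection: if $A(\alpha)$ holds then for each $n$ the least witness exists, so setting $\beta(n)$ to be that witness yields a $\beta$ with $F(\alpha,\beta)$; conversely each conjunct $R_n(\alpha \upharpoonright \beta(n))$ of $F(\alpha,\beta)$ witnesses $\alpha \in U_n$, whence $A(\alpha)$. Uniqueness is immediate because $F$ pins down every coordinate $\beta(n)$ as a definite least witness, so there is at most one admissible $\beta$; hence $A(\alpha) \eq (\exists \beta)F(\alpha,\beta) \eq (\exists \ \textrm{unique} \ \beta)F(\alpha,\beta)$. For continuity, the key observation is that $\beta(n) = (\mu m)\, R_n(\alpha \upharpoonright m)$ depends only on $\alpha \upharpoonright \beta(n)$; thus whenever $\alpha' \in A$ agrees with $\alpha$ on a sufficiently long initial segment one gets $\beta'(n) = \beta(n)$, so each coordinate of the section map $\alpha \mapsto \beta$ is locally constant and in particular continuous. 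Since a map into \ca{N}\ is continuous precisely when each of its coordinates is, the map $\alpha \mapsto \beta$, and hence $pr^{-1}(\alpha) = (\alpha,\beta)$, is continuous.

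The step I expect to require the most care is keeping $F$ at the level $\Pi^0_1$ while simultaneously forcing the section to be single-valued and continuous. The naive choice of letting $\beta$ encode an arbitrary code $s$ with $\alpha \in N_s \subseteq U_n$ destroys both uniqueness and continuity, and the alternative of recording the depth of inclusion $N_{\alpha \upharpoonright k} \subseteq U_n$ raises the complexity of $F$, since the predicate $N_t \subseteq U_n$ is no longer recursive. The least-witness device resolves both issues at once: it is decidable from finite data, which secures the $\Pi^0_1$ bound, and it depends on only an initial segment of $\alpha$, which secures the local constancy, hence continuity, of the section.
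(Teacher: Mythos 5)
Your proof is correct and takes essentially the same route as the paper: both write $A(\alpha) \eq (\forall n)(\exists m)(\textrm{recursive matrix})$, define $F$ as the graph of the least-witness function $\beta(n) = (\mu m)(\cdots)$, and obtain the $\Pi^0_1$ bound, uniqueness, and continuity from the fact that each least witness is determined by a finite amount of information about $\alpha$. The only difference is cosmetic: you use the initial-segment normal form $R_n(\alpha \upharpoonright m)$ with $R_n \subseteq \seq$ recursive, whereas the paper invokes 3C.4 of Moschovakis to get a recursive $R \subseteq \ca{N} \times \om \times \om$ with $A(\alpha) \eq (\forall n)(\exists m)R(\alpha,n,m)$ and argues continuity via the clopen sections $V_n = \set{\alpha'}{R(\alpha',n,\beta(n)) \ \& \ (\forall k < \beta(n))\neg R(\alpha',n,k)}$.
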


As usual there is a similar version of Lemma \ref{lemma pi02 has continuous inverse} with respect to some parameter $\alpha$. For reasons of exposition we refrain ourselves from stating this parameterized version.

\begin{proof}
Since $A$ is in $\Pi^0_2$ using 3C.4 of \cite{yiannis_dst} there is a recursive $R \subseteq \ca{N} \times \om \times \om$ such that $A(\alpha) \eq (\forall n)(\exists m)R(\alpha,n,m)$ for all $\alpha \in \ca{N}$.

Define $F(\alpha,\beta) \eq (\forall n)[R(\alpha,n,\beta(n)) \ \& \ (\forall k < \beta(n)) \neg R(\alpha,n,k)]$ with $\alpha, \beta \in \ca{N}$. It is clear that $F$ is in $\Pi^0_1$ and that for all $\alpha \in \ca{N}$ there is at most one $\beta$ such that $F(\alpha,\beta)$. Moreover we have that $A(\alpha) \eq (\exists \beta)F(\alpha,\beta)$ for all $\alpha, \beta \in \ca{N}$. Now we prove that the inverse function $pr^{-1}: A \to F: pr^{-1}(\alpha)= (\alpha,\beta)$ is continuous. Clearly it enough to prove that the function $g: A \to \ca{N}: g(\alpha) = \beta$, where $\beta$ is such that $F(\alpha,\beta)$, is continuous. So let any $\alpha$ and $\beta$ with $F(\alpha,\beta)$ and $N \in \om$. For all \n \ we consider the set $V_n= \set{\alpha'}{R(\alpha',n,\beta(n)) \ \& \ (\forall k < \beta(n)) \neg R(\alpha',n,k)}$. Since $F(\alpha,\beta)$ and $R$ is recursive the set $V = \cap_{n < N} V_n$ is an open neighborhood of $\alpha$. It is then clear that for all $\alpha '\in V$ and all $\beta'$ with $F(\alpha', \beta')$ we have that $\beta '\upharpoonright N = \beta \upharpoonright N$.
\end{proof}

We are now ready for the next theorem.

\begin{theorem}

\label{theorem countable sets admit good parameter in del}

Suppose that $(\ca{X}, \ca{T})$ is a recursively zero-dimensional Polish space with no isolated points and that $A$ is a countable \del \ subset of \ca{X}. Then there exists an $\ep \in \del$ and a Polish topology $\ca{T}_{\infty}$ with suitable distance function $d_{\infty}$, which extends \ca{T} and has the following properties: \tu{(1)} the Polish space $(\ca{X},\ca{T}_{\infty})$ is
$\ep$-recursively presented, \tu{(2)} the set $A$ is a $\Delta^0_1(\ep)$ subset of $(\ca{X},d_{\infty})$ and \tu{(3)} every $B \subseteq \ca{X}$ is a $\del(\alpha)$ subset of $(\ca{X},d)$, where $d$ is a suitable distance function for $(\ca{X},\ca{T})$ and $\alpha \in \ca{N}$, exactly when $B$ is a $\del(\ep,\alpha)$ subset of $(\ca{X},d_{\infty})$.
\end{theorem}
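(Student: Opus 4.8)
The plan is to follow the architecture of the proof of Theorem \ref{theorem main}, producing $\ca{T}_\infty$ as a direct sum of Polish metrics on $A$ and on $A^c=\ca{X}\setminus A$, but to exploit the two standing hypotheses in order to keep the emerging parameter inside \del. First observe that both halves of the necessary condition of Proposition \ref{proposition necessary condition for del} hold automatically here, and for reasons that will drive the whole construction. Since $A$ is countable it contains no perfect subset, so by the Effective Perfect Set Theorem (4F.1 of \cite{yiannis_dst}) every member of $A$ is a \del\ point; hence \del\ is trivially dense in $A$. Since \ca{X} has no isolated points, $A$ is meager, so every basic neighbourhood meeting $A^c$ meets it in a non-meager \pii\ set, which by 4F.20 of \cite{yiannis_dst} contains a \del\ point; hence \del\ is dense in $A^c$. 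These two facts are exactly what should let me replace the \sig\ data ``$F_i\cap N_s\neq\emptyset$'' of the proof of Theorem \ref{theorem main} by \del\ data.

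For the half corresponding to $A$, I would first upgrade ``every point of $A$ is \del'' to a single \del\ enumeration of $A$. Writing $A=\pi_1[F_1]$ with $F_1$ in $\Pi^0_1$ and $\pi_1$ recursive and injective (Theorem \ref{theorem yiannis 4A7}), the set $A$ is a \sig\ set all of whose members are \del; a \sig-boundedness argument then bounds their hyperdegrees below $\om_1^{CK}$, so there is a single \del\ real $\ep_1$ in which every member of $A$ is recursive. Enumerating the $\ep_1$-recursive reals that lie in $A$ --- membership being $\del(\ep_1)=\del$ --- yields an $\ep_1$-recursive surjection $\om\to A$, say $(a_n)_{\n}$. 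On $A$ I then put the discrete metric $d_A$ (distance $1$ between distinct points, $0$ otherwise); since the relation $a_n=a_m$ is $\Pi^0_1(\ep_1)$ and hence \del, the sequence $(a_n)_{\n}$ is a \del-recursive presentation of the Polish space $(A,d_A)$, whose topology is finer than the relative \ca{T}-topology. I expect this passage --- turning ``all points \del'' into one \del\ enumeration --- to be the main obstacle, and the step where countability is genuinely used.

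For the half corresponding to $A^c$, the \del-enumeration makes $A$ a $\Sigma^0_2(\ep_1)$ set, so $A^c$ is in $\Pi^0_2(\ep_1)$. Embedding \ca{X} into \ca{N} by the recursive injection $f$ of Lemma \ref{lemma zero-dimensional is embedded} (whose parameter lies in $\Delta^0_2\subseteq\del$) transports $A^c$ to a $\Pi^0_2$ set relative to a \del\ parameter, to which I apply the relativized Lemma \ref{lemma pi02 has continuous inverse}: this furnishes an $F_2$ in $\Pi^0_1(\ep_2)$, $\ep_2\in\del$, and an injection of $A^c$ onto a projection with continuous inverse $g=pr^{-1}$ given by an explicit recursive least-witness formula. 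Pulling back $p_\ca{N}$ through $g$ defines $d_{A^c}$; as in Remark \ref{remark Gdelta is necessary so that the inverse function is continuous} the continuity of $g$ and of the projection forces the $d_{A^c}$-topology to coincide with the relative \ca{T}-topology on the $G_\delta$ set $A^c$, while $F_2$ being closed makes $(A^c,d_{A^c})$ complete. A \del-recursive presentation of $(A^c,d_{A^c})$ is then obtained by selecting, uniformly and \del-recursively, a \del\ point of $A^c$ in each basic neighbourhood meeting it (using the density fact above together with 4F.20 and the Strong $\Delta$-Selection Principle 4D.6 of \cite{yiannis_dst}); since $g$ is recursive, the pulled-back distances between these \del\ points are \del-computable.

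Finally I would form $d_\infty=d_A\oplus d_{A^c}$ and set $\ep=\langle \ep_1,\ep_2\rangle$ (together with the embedding parameter), which lies in \del. Exactly as in the proof of Theorem \ref{theorem main}, $(\ca{X},d_\infty)$ is $\ep$-recursively presented, $A$ is the clopen summand and hence $\Delta^0_1(\ep)$, and $\ca{T}_\infty$ extends \ca{T} since every \ca{T}-open $V$ meets $A$ in a (trivially) $d_A$-open set and $A^c$ in a $d_{A^c}$-open set. The equivalence in (3) is verified in both directions as in Theorem \ref{theorem main}, now with every relevant map \del-recursive: on the $A^c$-side via Theorem \ref{theorem yiannis 4A7} and the injective-image preservation of \del\ (4D.7 of \cite{yiannis_dst}), and on the $A$-side by reducing, through the \del-enumeration, to the agreement of the \del\ index set $\{\n : a_n\in B\}$. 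Thus the only genuinely new ingredient beyond Theorem \ref{theorem main} is the \del-enumeration of the countable set $A$; everything else is a parameter-tracking refinement of the earlier construction.
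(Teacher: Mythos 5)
Your proposal is correct and shares the paper's overall architecture: both proofs rest on the category argument (countability plus the absence of isolated points makes the countable half meager, so 4F.20 of \cite{yiannis_dst} supplies \del\ points densely in the co-countable half), on Lemma \ref{lemma pi02 has continuous inverse} applied to the co-countable half after making it $\Pi^0_2$ in a \del\ parameter, and on the embedding of Lemma \ref{lemma zero-dimensional is embedded}; so this is essentially the same proof, but with two local deviations worth recording. First, the paper interchanges $A$ with $\ca{X}\setminus A$ and treats the countable half exactly as in Theorem \ref{theorem main}, via a set $F_2$ and an injection $\pi_2$ from Theorem \ref{theorem yiannis 4A7}: since $F_2$ is then countable, the Effective Perfect Set Theorem makes all its points \del, and Restricted Quantification gives $\ep_2\in\del$ with no uniform enumeration needed on that half. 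You instead put the discrete metric on the countable half, which bypasses $F_2,\pi_2$ entirely but forces you to produce a \del-recursive enumeration of $A$; this is a genuine (and legitimate) simplification of that half, since the enumeration fact is needed anyway, as noted below. Second, you build the presentation of the co-countable half by Strong $\Delta$-Selection of \del\ points of $A^c$ inside basic neighbourhoods, whereas the paper proves that the relation $R_1(s)\eq N_s\cap F_1\neq\emptyset$ is \del\ (via continuity of $\pi_1^{-1}$, co-meagerness, 4F.20 and Restricted Quantification) and then reuses the leftmost-branch presentation from the proof of Theorem \ref{theorem main}; these amount to the same density argument, and your selection route does rely, as you implicitly use, on the decidability of basic neighbourhoods in a recursively zero-dimensional space. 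Finally, the step you rightly flag as the main obstacle --- passing from ``every member of the countable \sig\ set is \del'' to a single \del\ real computing them all, hence a \del\ enumeration --- is a genuine theorem (essentially Kreisel's boundedness for hyperdegrees; it is not a one-line consequence of $\Sigma^1_1$-boundedness on the well-orderings, the standard proofs going through pseudo-hierarchies), and the paper leans on exactly the same fact without proof, in the assertion that there is a \del\ sequence $(x_n)$ with $\ca{X}\setminus A=\{x_n:\ n\in\om\}$; so your treatment is, if anything, more explicit than the paper's on the one point where both proofs import an external result.
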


\begin{proof}
For notational purposes we interchange $A$ with $\ca{X} \setminus A$, for it is the case of a co-countable \del \ set which is interesting. So let us assume that $\ca{X} \setminus A$ is countable. Since $A$ has a countable complement and it is \del \ there is a \del \ sequence $(x_n)_{\n}$ in \ca{X} such that $\ca{X} \setminus A = \set{x_n}{\n}$. Using this it is easy to find some $\ep_D \in \del$ such that $\ca{X}\setminus A$ is in $\Sigma^0_2(\ep_D)$ and so $A$ is in $\Pi^0_2(\ep_D)$. To see this we remark that $x \not \in A \eq (\exists n)(\forall s)[x \in N(\ca{X},s) \longrightarrow x_n \in N(\ca{X},s)]$. Define then $\ep_D(\langle n,s \rangle) = 1 \eq x_n \in N(\ca{X},s)$ and $0$ otherwise.

We first show how one can reduce the problem to subspaces of \ca{N}. We consider the embedding $f: \ca{X} \to \ca{N}$ of Lemma \ref{lemma zero-dimensional is embedded} and an $\ep_0 \in \Delta^0_2$ such that the set $\ca{Y} :=f[\ca{X}]$ is in $\Pi^0_1(\ep_0)$.\vg{It is clear that \ca{Y} is non-meager in \ca{N} (why??) and since $\ca{Y} \setminus f[A] = f[\ca{X} \setminus A]$ is a countable subset of \ca{N} it follows that $f[A]$ is non-meager as well.}  We take $\ep^* = \langle \ep_0, \ep_D \rangle \in \del$ and using (the parameterized version of) Remark \ref{remark about the embedding} we have that the set $f[A]$ is a $\Pi^0_2(\ep^*)$ subset of \ca{N}.

We consider the set \ca{Y} with the usual distance function of the Baire space $p_{\ca{N}}$. From Remark \ref{remark about the embedding} the space $(\ca{Y},p_\ca{N})$ is recursively presented in some arithmetical parameter, which we may assume that we have included in the previous $\ep^*$.\vg{Since \ca{Y} is $\Pi^0_1(\ep^*)$ subset of \ca{N} and $f[A]$ is a $\Pi^0_2(\ep^*)$ subset of \ca{N} it is easy to see that $f[A]$ is a $\Pi^0_2(\ep^*)$ subset of \ca{Y}.} Moreover the space $(\ca{Y},p_\ca{N})$ has no isolated points for otherwise $(\ca{X},d)$ would have had isolated points. The set $\ca{Y} \setminus f[A] = f[\ca{X} \setminus A]$ is a countable subset of \ca{Y} and since \ca{Y} has no isolated points it follows that $f[A]$ is co-meager in \ca{Y}.

Thus we have an $\ep^* \in \del$ such that the space $(\ca{Y},p_\ca{N})$ is recursively presented in $\ep^*$, the set $f[A]$ is a $\Pi^0_2(\ep^*)$ subset of \ca{N}, and $f[A]$ is co-countable and co-meager in \ca{Y}. We will show that there is a Polish topology $\ca{S}_{\infty}$ on \ca{Y} with suitable distance function $p_\infty$ and an $\ep \in \del$ such that all conclusions are satisfied for this \ep \ and for \ca{Y}, $p_\infty$, $\ca{S}_{\infty}$ and $f[A]$ in the place of \ca{X}, $d_\infty$, $\ca{T}_\infty$ and $A$ respectively. From there we can turn back to \ca{X} using the function $f$. More specifically we define $d_\infty(x,y) = p_\infty(f(x),f(y))$ for all $x,y \in \ca{X}$, i.e., we turn $f$ into an isometry. Then all conclusions are satisfied.

We now go back to the proof of Theorem \ref{theorem main} and we show that the $\ep_1$ and $\ep_2$ defined there, are in fact in \del. Since $f[A]$ is  a $\Pi^0_2(\ep^*)$ subset of \ca{N} from the parameterized version of Lemma \ref{lemma pi02 has continuous inverse} there is an $F_1 \subseteq \ca{N} \times \ca{N}$ in $\Pi^0_1(\ep^*)$, and a function $\pi_1: \ca{N} \times \ca{N} \to \ca{N}$ such that: (a) $\pi_1$ is $\ep^*$-recursive, (b) $\pi_1$ is injective on $F_1$, (c) $\pi_1[F_1] = f[A]$ and (d) the function $\pi_1^{-1}: f[A] \to F_1$ is continuous. In order to match with the notation of the proof of Theorem \ref{theorem main} we view $F_1$ as a subset of \ca{N}. We now show that the $\ep_1$ of the proof of Theorem \ref{theorem main} is in \del. Define the relation $R_1 \subseteq \seq$ by $R_1(s) \eq N_s \cap F_1 \neq \emptyset$,
where $N_s= \set{\alpha \in \ca{N}}{\alpha(i) = (s)_i \ \forall i < lh(s)}$ for all $s \in \seq$. We need to show that $R_1$ is \del. Since $\ep^* \in \del$ and $F_1$ is in $\Pi^0_1(\ep^*)$ and it is clear that $R_1$ is in \sig.

We now claim that $R_1(s) \eq (\exists \alpha \in \del)[\alpha \in N_s \cap F_1]$. Using the Theorem on Restricted Quantification (4D.3 in \cite{yiannis_dst}) it follows from the latter equivalence that $R_1$ is in \pii. The right-to-left-hand implication is clear, so let us assume that $R_1(s)$. Pick some $\alpha_0 \in N_s \cap F_1$. In particular $\alpha_0 = \pi_1^{-1}(y_0)$ for some $y_0 \in f[A]$. Since the function $\pi_1^{-1}: f[A] \to F_1$ is continuous there is some basic open (and thus recursive) neighborhood $V \subseteq \ca{N}$ such that $y_0 \in V$ and for all $y \in V \cap f[A]$ we have that $\pi_1^{-1}(y) \in N_s \cap F_1$. Since $f[A]$ is co-meager in \ca{Y} and $V \cap \ca{Y}$ is non-empty open in \ca{Y} we have that the set $V \cap f[A]$ is non-meager in \ca{Y}. Moreover $V \cap f[A]$ is a $\Pi^0_2(\ep^*)$ -and thus $\del(\ep^*)$- subset of \ca{Y}. From the parameterized version of 4F.20 of \cite{yiannis_dst} there is some $y \in V \cap f[A]$, which is a $\del(\ep^*)$ point of \ca{Y}. Since $\ep^* \in \del$ it is easy to check that $y$ is a \del \ point of \ca{N} and so $\alpha = \pi_1^{-1}(y) \in N_s \cap F_1$ is also in \del. Thus the equivalence is proved.

Now we deal with the complement of $f[A]$ in \ca{Y} and subsequently with $\ep_2$. From Theorem \ref{theorem yiannis 4A7} there is a set $F_2 \subseteq \ca{N}$ in $\Pi^0_1$ and a recursive function $\pi_2: \ca{N} \to \ca{X}$ such that $\pi_2[F_2] = \ca{Y} \setminus f[A]$ and $\pi_2$ is injective on $F_2$. Since $\ca{Y} \setminus f[A]$ is countable, so is $F_2$. It follows from the Effective Perfect Set Theorem 4F.1 in \cite{yiannis_dst} that $F_2$ consists of \del \ points. Hence $N_s \cap F_2 \neq \emptyset \eq (\exists \alpha \in \del)[\alpha \in N_s \cap F_2]$. From the latter equivalence it follows that $\ep_2 \in \del$. Now we take $\ep = \langle \ep^*,\ep_1,\ep_2\rangle \in \del$ and we continue as in the proof of Theorem \ref{theorem main}.
\end{proof}
Notice that in the previous proof there is no need to choose the function $\pi_2$ so that its inverse function is continuous. The reason for this is our hypothesis about countability which ensures that the corresponding closed set $F_2$ consists of \del \ points. Suppose for the moment that we drop the hypothesis about countability in favor of $A$ and its complement being $G_\delta$ and \del \ sets. Assume moreover that \del \ is dense in both $A$ and $\ca{X} \setminus A$. Then from (the parameterized version of) Lemma \ref{lemma pi02 has continuous inverse} we can always choose the previous functions $\pi_1$ and $\pi_2$ so that their inverses are continuous functions and then we can repeat the previous proof. The only problem with this idea is that we have to make sure that our application of Lemma \ref{lemma pi02 has continuous inverse} stays within the level of hyperarithmetical parameters. Under the hypothesis about countability that is clear, for a countable \del \ set is easily in $\Sigma^0_2(\ep^*)$ for some hyperarithmetical $\ep^*$. But now it is not so clear that the $G_\delta$ sets $A$ and $\ca{X} \setminus A$ that we start with (which are also in \del) are in fact in $\Pi^0_2(\ep^*)$ for some hyperarithmetical $\ep^*$. The latter assertion is true thanks to a deep result of Louveau, which states that if a subset $A$ of a recursively presented Polish space \ca{X} is \del \ and $\tboldsymbol{\Pi}^0_\xi$, for some $\xi < \om_1$, then there exists an $\ep \in \del$ such that $A$ is in $\Pi^0_\xi(\ep)$, c.f. \cite{louveau_a_separation_theorem_for_sigma_sets}.

\vg{\begin{theorem}{\normalfont (Louveau)}
\label{theorem louveau}
Suppose that \ca{X} is a recursively presented Polish space and that $A \subseteq \ca{X}$ is \del \ and $\tboldsymbol{\Pi}^0_\xi$, for some $\xi < \om_1$. Then there exists an $\ep \in \del$ such that $A$ is in $\Pi^0_\xi(\ep)$.
\end{theorem}
}

\vg{In the light of Louveau's Theorem we can see that we can start in Lemma \ref{lemma pi02 has continuous inverse} with a $G_\delta$ set $A$ which is \del. The only difference is that resulting set $F$ is in $\Pi^0_1(\ep)$ for some $\ep \in \del$.}

We can now proceed to the next theorem.

\begin{theorem}

\label{theorem G_delta and complement G_delta}

Suppose that $(\ca{X},\ca{T})$ is a recursively zero-dimensional Polish space and that $A$ is a \del \ subset of $\ca{X}$, which is also in $\tboldsymbol{\Delta}^0_2$. The following hold: \tu{(1)} the set $A$ admits a good parameter in \del \ if and only \del \ is dense in both $A$ and $\ca{X} \setminus A$ and \tu{(2)} if $A$ does not admit a good parameter in \del, then \param \ is hyperarithmetic in every good parameter for $A$.
\end{theorem}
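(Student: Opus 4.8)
I would treat the two assertions in turn, reusing the machinery behind Theorem~\ref{theorem countable sets admit good parameter in del}. For (1) the forward direction is exactly Proposition~\ref{proposition necessary condition for del}. For the converse assume \del\ is dense in $A$ and in $\ca{X}\setminus A$. Since $A$ is \del\ and $\tboldsymbol{\Delta}^0_2$, both $A$ and its complement are \del\ and $\tboldsymbol{\Pi}^0_2$; by Louveau's theorem \cite{louveau_a_separation_theorem_for_sigma_sets} there is an $\ep^*\in\del$ with $A,\ca{X}\setminus A\in\Pi^0_2(\ep^*)$. I then follow the proof of Theorem~\ref{theorem countable sets admit good parameter in del}: embed \ca{X} into \ca{N} via Lemma~\ref{lemma zero-dimensional is embedded} (its parameter lies in $\Delta^0_2\subseteq\del$, so it is absorbed into $\ep^*$), apply the parameterised form of Lemma~\ref{lemma pi02 has continuous inverse} to \emph{both} $f[A]$ and $\ca{Y}\setminus f[A]$ to obtain $\Pi^0_1(\ep^*)$ sets $F_1,F_2$ and $\ep^*$-recursive $\pi_1,\pi_2$ whose inverses are continuous, and finally check that the parameters $\ep_1,\ep_2$ of Theorem~\ref{theorem main} lie in \del. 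The only change from the proof of Theorem~\ref{theorem countable sets admit good parameter in del} is that in establishing $R_i(s)\eq(\exists\alpha\in\del)[\alpha\in N_s\cap F_i]$ the co-meagerness step (and the appeal to 4F.20 of \cite{yiannis_dst}) is replaced by the density hypothesis: continuity of $\pi_i^{-1}$ pulls a basic neighbourhood back, and density of \del\ in the relevant set supplies a \del\ point of it there. Since $\ep^*\in\del$, $f$ and $f^{-1}$ carry \del\ points to \del\ points, so density transfers along $f$. Restricted Quantification (4D.3 \cite{yiannis_dst}) then gives $R_i\in\pii$, hence $\ep_i\in\del$, and $\ep=\langle\ep^*,\ep_1,\ep_2\rangle\in\del$ is a good parameter.

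For (2) I argue the contrapositive: if some good parameter $\ep$ for $A$ satisfies $\param\notin\del(\ep)$, equivalently $\omega_1^{\ep}=\omega_1^{\mathrm{CK}}$, then $A$ admits a good parameter in \del, contradicting the hypothesis. By Theorem~\ref{theorem yiannis 4A7} write $A=\pi_1[F_1]$ with $F_1\subseteq\ca{N}$ in $\Pi^0_1$ and $\pi_1$ recursive and injective on $F_1$, and consider the genuinely \sig\ predicate $E(s)\eq F_1\cap N_s\neq\emptyset$. A short leftmost-branch argument shows that if \del\ is \emph{not} dense in $A$ then $E\notin\del$: otherwise the \del-recursive leftmost branch of $F_1$ inside a witnessing basic open set would be a \del\ point of $A$ where there is none. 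The plan is then: (i) show that any good parameter $\ep$ forces $E\in\del(\ep)$; (ii) put the standard \pii-norm on the \pii\ set $\om\setminus E$; since $E\in\del(\ep)$ the set $\om\setminus E$ is $\sig(\ep)$, so by the Boundedness Theorem (c.f. \cite{yiannis_dst}) its norm is bounded by some $\xi<\omega_1^{\ep}=\omega_1^{\mathrm{CK}}$, whence $\om\setminus E$, and therefore $E$, is \del. Together with the symmetric statement for $\ca{X}\setminus A$ this yields density of \del\ in both $A$ and $\ca{X}\setminus A$ --- using the $G_\delta$ description $A=\bigcap_nU_n$ from Louveau to descend through \del-neighbourhoods that meet $A$ to a \del\ point lying \emph{inside} $A$ --- so by part (1) $A$ has a good parameter in \del, the desired contradiction. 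Hence every good parameter $\ep$ satisfies $\omega_1^{\ep}>\omega_1^{\mathrm{CK}}$, i.e. $\param\in\del(\ep)$.

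The main obstacle is step (i). For the \emph{canonical} parameter $\ep_A$ of Theorem~\ref{theorem main} it is immediate, since $\ep_A$ is defined so as to encode $E$; but for an \emph{arbitrary} good parameter $\ep$ one only gets $E\in\sig(\ep)$ for free --- a non-empty open set being detected by the presence of a presentation point --- and deciding \emph{emptiness} of the relevant neighbourhoods requires that the neighbourhood system of $(\ca{X},d_\infty)$ can be searched $\ep$-effectively, i.e. essentially that $(\ca{X},d_\infty)$ is $\ep$-recursively zero-dimensional. The Gandy-basis phenomenon shows this subtlety is real: the mere existence of a $\del(\ep)$ point of $A$ with $\omega_1^{\ep}=\omega_1^{\mathrm{CK}}$ is possible even when $A$ has no \del\ point, so (i) cannot be read off individual $\del(\ep)$ points and must genuinely use the $\ep$-recursive \emph{presentation} furnished by a good parameter. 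I would attack (i) by running the construction of part (1) relativised to $\ep$ --- legitimate because a good $\ep$ makes $\del(\ep)$ dense in $A$ and $\ca{X}\setminus A$ --- so as to manufacture a \emph{zero-dimensional} good parameter $\eta\in\del(\ep)$ in whose presentation neighbourhood-emptiness is decidable, and then transfer the resulting $\del(\ep)$ information back to the lightface predicate $E$.
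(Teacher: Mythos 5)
Your plan for part \tu{(1)} coincides with the paper's proof: Proposition \ref{proposition necessary condition for del} for one direction and, for the converse, Louveau's theorem plus Lemma \ref{lemma zero-dimensional is embedded} and the parameterised Lemma \ref{lemma pi02 has continuous inverse}, with the density hypothesis standing in for the co-meagerness/4F.20 step of Theorem \ref{theorem countable sets admit good parameter in del}. No issues there.

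Part \tu{(2)}, however, has a genuine gap, and it is the one you yourself flag: your step (i) is never proved, and your proposed repair does not close it. The source of the trouble is that in (2) you abandon the representation you used in (1) and instead take $F_1$ from Theorem \ref{theorem yiannis 4A7}, for which the inverse $\pi_1^{-1}$ need not be continuous. Then $\pi_1[N_s\cap F_1]$ is merely a $\del$ subset of $A$, not a relatively open one; consequently nonemptiness of $N_s\cap F_1$ is not reflected by nonemptiness of any open set of the extended topology, and no $\ep$-recursive presentation can be expected to detect it (deciding nonemptiness of arbitrary $\del(\ep)$ sets is properly $\sig(\ep)$, not $\del(\ep)$). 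Your patch --- build a zero-dimensional good parameter $\eta\in\del(\ep)$ and ``transfer the resulting $\del(\ep)$ information back to the lightface predicate $E$'' --- breaks precisely at the transfer: emptiness of neighbourhoods in the $\eta$-presentation is information about \emph{different} closed sets, and absent continuity of $\pi_1^{-1}$ there is no reduction of $E$ to it. The paper's resolution is to keep in part (2) the sets $F_1,F_2$ with continuous inverses coming from Lemma \ref{lemma pi02 has continuous inverse} (this is exactly where the $\tboldsymbol{\Delta}^0_2$ hypothesis earns its keep): if $A$ has no good parameter in \del, then some $R_i(s)\eq N_s\cap F_i\neq\emptyset$ is \sig\ but not \del, so Spector's theorem \cite{spector_recursive_well-orderings} gives $R_i\equiv_h\param$; and for an \emph{arbitrary} good parameter $\ep$, continuity of $\pi_i^{-1}$ makes $\pi_i[N_s\cap F_i]=V\cap f[A]$ open in the extended topology, which is $\langle\ep^*,\ep\rangle$-recursively presented, so it contains a point $y_0$ recursive in $\langle\ep^*,\ep\rangle$; goodness of $\ep$ transfers the singleton $\{y_0\}$ back to a $\del(\langle\ep^*,\ep\rangle)$ subset for the original topology, injectivity of $\pi_i$ then makes $\pi_i^{-1}(y_0)$ a $\del(\ep)$ point of $N_s\cap F_i$, and Restricted Quantification (4D.3 of \cite{yiannis_dst}) yields $R_i\in\pii(\ep)$, hence $R_i\in\del(\ep)$ and $\param\in\del(\ep)$. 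Once you work with these $R_i$ instead of your $E$, your step (i) becomes exactly this argument, and your contrapositive/boundedness step (ii) is redundant --- it amounts to reproving the instance of Spector's theorem that can simply be cited.
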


\begin{proof}
From Louveau's Theorem there is an $\ep_D \in \del$ such that both sets $A$ and $\ca{X} \setminus A$ are in $\Pi^0_2(\ep_D)$. We consider the embedding $f: \ca{X} \to \ca{N}$ of Lemma \ref{lemma zero-dimensional is embedded} and an $\ep_0 \in \Delta^0_2$ such that the set $\ca{Y} :=f[\ca{X}]$ is in $\Pi^0_1(\ep_0)$. Consider also an arithmetical $\ep'_0$ such that the space $(\ca{Y},p_\ca{N})$ admits an $\ep'_0$-recursive presentation and define $\ep^* = \langle \ep_D, \ep_0, \ep'_0 \rangle$. It is clear that $\ep^* \in \del$, the space $(\ca{Y},p_\ca{N})$ is $\ep^*$-recursively presented and the sets $f[A]$ and $\ca{Y} \setminus f[A]$ are in $\Pi^0_2(\ep^*)$.

As pointed out in the proof of Theorem \ref{theorem countable sets admit good parameter in del} every good parameter for $f[A]$ (as a subset of \ca{Y}) is also a good parameter for $A$, for we can define the new distance function on \ca{X} in such a way that the function $f$ becomes an isometry. Using the same method one can see that the converse is also true modulo the parameter $\ep^*$ i.e., if $\ep$ is a good parameter for $A$ then $\langle \ep^*, \ep \rangle$ is a good parameter for $f[A]$. Thus $A$ admits a good parameter in \del \ exactly when $f[A]$ admits a good parameter in \del.

We apply the parameterized version of Lemma \ref{lemma pi02 has continuous inverse}. This yields sets $F_i \subseteq \ca{N}$ in $\Pi^0_1(\ep^*)$ and $\ep^*$-recursive functions $\pi_i : \ca{N} \to \ca{X}$, $i=1,2$, such that (a) $\pi_i$ is injective on $F_i$ for $i=1,2$, (b) $\pi_1[F_1] = f[A]$, $\pi_2[F_2] = \ca{Y} \setminus f[A]$, (c) the inverses $\pi_1^{-1}: f[A] \to F_1$ and $\pi_2^{-1}: \ca{Y} \setminus f[A] \to F_2$ are continuous.

We are now ready to prove (1). The left-to-right-hand implication is Proposition \ref{proposition necessary condition for del}, so we prove the inverse direction. Consider the previous functions $\pi_i$ and the sets $F_i$, $i=1,2$. We repeat the steps of the proof of Theorem \ref{theorem main}. Since \del \ is dense in $A$ and $f$ is recursive and injective, it is easy to check that \del \ is dense both in $f[A]$ and in $\ca{Y} \setminus A$. Using the argument about density as in the previous proof we can see that the parameters $\ep_1$ and $\ep_2$ (with the notation of the proof of Theorem \ref{theorem main}) are in fact in \del. Thus the fraction $\ep = \langle \ep^*, \ep_1, \ep_2 \rangle$ is  a good parameter for $f[A]$ which is in \del \ and so from the previous comments $A$ admits a good parameter in \del.

Now let us prove (2). Since $A$ (and hence $f[A]$) does not admit a good parameter in \del \ at least one of the relations $R_1, R_2 \subseteq \seq$ defined by $R_i(s) \eq N_s \cap F_i \neq \emptyset$ where $i=1,2$ and $N_s= \set{\alpha \in \ca{N}}{\alpha(j) = (s)_j \ \forall j < lh(s)}$, are not in \del. (For otherwise both $\ep_1$ and $\ep_2$ would be in \del \ and so as above $f[A]$ would admit a good parameter in \del.) Let us say that $R_1$ is not in \del. It is clear that $R_1$ is a \sig \ set. From a well-known theorem of Spector (c.f. \cite{spector_recursive_well-orderings})it follows that the characteristic function $\ep_1$ of $R_1$ has the same hyperdegree as the one of \param. Now let $\ep$ be any good parameter for $A$. We will prove that $R_1$ is in $\del(\ep)$. Notice that $\langle \ep^*, \ep \rangle$ is a good parameter for $f[A]$.

We claim that $R_1(s) \eq (\exists \alpha \in \del(\ep))[\alpha \in N_s \cap F_1]$. To see this assume that $N_s \cap F_1 \neq \emptyset$; therefore $\pi_1[N_s \cap F_1] \neq \emptyset$. Since the function $\pi_1^{-1}: f[A] \to F_1$ is continuous the set $\pi_1[N_s \cap F_1]$ is open in $f[A]$. Hence there is an open $V \subseteq \ca{N}$ such that $\pi_1[N_s \cap F_1] = V \cap f[A] \neq \emptyset$. The set $V \cap f[A]$ is open in the extended topology, which is $\langle \ep^*, \ep \rangle$-recursively presented, and therefore it contains a point, say $y_0$, which is recursive in $\langle \ep^*,\ep \rangle$. It follows that the singleton $\{y_0\}$ is a $\del(\langle \ep^*,\ep \rangle)$ subset of $(\ca{Y},p_{\infty})$, where $p_{\infty}$ is a suitable distance function for the extended topology. Therefore $\{y_0\}$ is a $\del(\langle \ep^*,\ep \rangle)$ subset of $(\ca{Y},p_\ca{N})$. Since $y_0 \in V \cap f[A] = \pi_1[N_s \cap F_1]$ there is some $\alpha_0 \in N_s \cap F_1$ such that $y_0 = \pi_1(\alpha_0)$. Using the fact that $\pi_1$ is injective on $F_1$ we have for $\beta \in \ca{N}$ that $\beta = \alpha_0 \eq \beta \in F_1 \ \& \ \pi_1(\beta) \in \{y_0\}$. Since $\pi_1$ is a recursive function it follows that the point $\alpha_0$ is in  $\del(\langle \ep^*, \ep \rangle) = \del(\ep)$ and the claim has been proved.

It follows from the Theorem of Restricted Quantification (4D.3 in \cite{yiannis_dst}) that $R_1$ is in $\pii(\ep)$. Since $R_1$ is in \sig \ it follows that $R_1$ is in $\del(\ep)$.
\end{proof}

\begin{corollary}
\label{corollary to G_delta and complement G_delta}
Suppose that $(\ca{X},\ca{T})$ is a recursively zero-dimensional Polish space and that $A$ is a \del \ subset of \ca{X}, which is also in $\tboldsymbol{\Delta}^0_2$. The set $A$ admits a good parameter which is either in \del \ or has the same hyperdegree as \param.
\end{corollary}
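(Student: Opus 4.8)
The plan is to read the corollary straight off the dichotomy already packaged in Theorem \ref{theorem G_delta and complement G_delta}, using Theorem \ref{theorem main} only to supply a concrete good parameter sitting in the right place. First I would invoke Theorem \ref{theorem main}: it applies because $(\ca{X},\ca{T})$ is in particular a recursively presented Polish space and $A$ is \del. This yields a good parameter $\ep_A$ for $A$ together with the extended topology $\ca{T}_\infty$, and crucially $\ep_A$ is recursive in \param. Since ``recursive in'' implies ``hyperarithmetical in'', we record that $\ep_A \in \del(\param)$.

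Next I would split into the two cases governed by part \tu{(1)} of Theorem \ref{theorem G_delta and complement G_delta}. If $A$ admits a good parameter in \del, then that parameter realizes the first alternative of the corollary and there is nothing more to do. So the only case that needs work is when $A$ does \emph{not} admit a good parameter in \del.

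In that case I would apply part \tu{(2)} of Theorem \ref{theorem G_delta and complement G_delta} to the specific good parameter $\ep_A$ produced above. Since $\ep_A$ is itself a good parameter for $A$, the conclusion of \tu{(2)} gives that \param \ is hyperarithmetical in $\ep_A$, that is $\param \in \del(\ep_A)$. Combining this with the reduction $\ep_A \in \del(\param)$ noted in the first step, we obtain that $\ep_A$ and \param \ are each hyperarithmetical in the other, i.e. they have the same hyperdegree. Hence $\ep_A$ is a good parameter realizing the second alternative of the corollary.

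There is no genuine obstacle here; the corollary is a direct consequence of the preceding theorem. The only point that wants attention is the observation that the parameter $\ep_A$ coming from Theorem \ref{theorem main} simultaneously is a good parameter (so that part \tu{(2)} may be applied to it) and is recursive in \param \ (so that the reverse reduction $\ep_A \in \del(\param)$ holds). Once both facts are in place the two hyperarithmetical reductions compose to give equality of hyperdegrees, completing the argument.
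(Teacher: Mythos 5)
Your proposal is correct and takes essentially the same approach as the paper: the paper's proof simply states that the corollary is immediate from Theorems \ref{theorem main} and \ref{theorem G_delta and complement G_delta}, and your write-up spells out exactly that deduction (take the good parameter $\ep_A$ recursive in \param\ supplied by Theorem \ref{theorem main}, split on whether $A$ admits a good parameter in \del, and in the negative case apply part (2) of Theorem \ref{theorem G_delta and complement G_delta} to $\ep_A$ and compose the two hyperarithmetical reductions). Nothing further is needed.
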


\begin{proof}
This is immediate from Theorems \ref{theorem main} and \ref{theorem G_delta and complement G_delta}.
\end{proof}

It would be interesting to see if the previous theorem can be extended to sets which are above the level of $\tboldsymbol{\Delta}^0_2$ sets. Before we close this section it is perhaps worth mentioning that the idea of using the $\Pi^0_1$ sets $F_i$ and the recursive functions $\pi_i$ is not always the best way to compute the complexity of the parameter, for in some cases it is easier to use directly the properties of the set that we start with.

\begin{proposition}

\label{proposition vector spaces}

Suppose that $(\ca{X}, \|\cdot\|)$ is a separable vector space (real or complex). We consider the induced distance function $d(x,y) =  \norm{x}{y}$, $x, y \in \ca{X}$ and we assume that the metric space $(\ca{X},d)$ is recursively presented. Then every open ball of \ca{X} admits a good parameter in \del.
\end{proposition}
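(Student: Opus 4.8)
The plan is to exploit the linear structure of $(\ca{X},\|\cdot\|)$ to replace the abstract machinery of Theorem \ref{theorem G_delta and complement G_delta} with an explicit computation. Fix an open ball $A = \set{x \in \ca{X}}{\norm{x}{x_0} < r}$. Rather than passing through the $\Pi^0_1$ sets $F_i$ and the maps $\pi_i$ as in the proof of Theorem \ref{theorem main}, I would verify directly that \del \ is dense both in $A$ and in its complement $\ca{X} \setminus A$, and then hope to invoke a density argument to place the emerging parameter in \del. The key observation is that in a recursively presented \emph{vector} space the \del \ points are extremely abundant: any affine combination with rational (hence recursive) coefficients of finitely many presentation points is again a \del \ point, and such points are norm-dense in \ca{X}. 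This should give density of \del \ in any open set with a \del \ ``center'', and in particular in the ball $A$ itself.

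First I would show that \del \ is dense in $A$. Given a basic open $V \in \ca{T}$ with $V \cap A \neq \emptyset$, pick any $y \in V \cap A$; since $V \cap A$ is open in the norm topology there is a rational radius $\rho$ with the $\rho$-ball around $y$ inside $V \cap A$. Approximating $y$ by a rational combination $z$ of presentation points to within $\rho$ places a \del \ point $z$ inside $V \cap A$, as desired. For the complement, the crucial point is that the \emph{boundary sphere} $\set{x}{\norm{x}{x_0} = r}$ separates $A$ from its exterior, and the exterior $\set{x}{\norm{x}{x_0} > r}$ is again open, so the same rational-approximation argument produces \del \ points there. The one place demanding care is density of \del \ in $\ca{X} \setminus A$ \emph{across the sphere}: if $V$ meets $\ca{X} \setminus A$ only on the sphere $\norm{x}{x_0} = r$, the interior-point argument fails. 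Here I would use that $V$ is open and the exterior is nonempty near any sphere point (no isolated points, since translation and scaling act freely), so $V$ actually meets the open exterior, restoring the argument.

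With density of \del \ established in both pieces, I would appeal to Theorem \ref{theorem G_delta and complement G_delta}(1): an open ball is manifestly $\boldsig^0_1 \subseteq \tboldsymbol{\Delta}^0_2$, so that theorem applies and density of \del \ on both sides is exactly the criterion guaranteeing a good parameter in \del. This reduces the whole proposition to the two density verifications above. I expect the \textbf{main obstacle} to be the behaviour at the sphere: showing that \del \ is dense in $\ca{X} \setminus A$ requires knowing that every open set meeting the closed ball's complement actually reaches into the open exterior, which is where the vector-space (no isolated points, homogeneity under scaling about $x_0$) hypothesis does real work. A secondary technical point is checking that the rational affine combinations of presentation points are genuinely \del \ and norm-dense; this is routine from recursive presentation but should be stated, since it is the engine driving both density claims. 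Once these are in hand, the conclusion is immediate from Theorem \ref{theorem G_delta and complement G_delta}.
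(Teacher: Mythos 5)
Your reduction collapses at its final step: Theorem \ref{theorem G_delta and complement G_delta} is stated, and proved, only for \emph{recursively zero-dimensional} Polish spaces, and a nontrivial separable normed vector space is convex, hence connected, hence as far from zero-dimensional as possible. This is not a removable technicality: the proof of that theorem runs through Lemma \ref{lemma zero-dimensional is embedded}, which embeds the space homeomorphically onto a $\Pi^0_1(\ep)$ subset of \ca{N}, and a connected space admits no such embedding. So the sufficiency direction (``density of \del \ on both sides implies a good parameter in \del'') is simply not available here; what survives of your argument, the two density verifications, gives only the \emph{necessary} condition of Proposition \ref{proposition necessary condition for del}, which points the wrong way. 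Indeed, this is precisely the point of the remark preceding the proposition in the paper: the general machinery is not always the right tool, and for vector spaces one argues directly. A secondary gap: your stated engine, that rational affine combinations of presentation points are \del \ points, silently assumes that addition and scalar multiplication are recursive; the hypothesis only says that the induced \emph{metric} space is recursively presented, and the vector operations are not metrically definable in a general normed space (metric midpoints need not be unique), so this step has no justification from the stated assumptions.

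The paper's proof is a direct construction, and it is what your density observations should feed into. For $A$ the open unit ball, it defines $d_A(x,y) = d(x,y) + \left| d(x,\ca{X}\setminus A)^{-1} - d(y,\ca{X}\setminus A)^{-1} \right|$, the standard device that makes an open set completely metrizable while inducing the relative topology; here the distance to the complement is explicit, namely $1-\|x\|$. A \del \ sequence dense in $A$ is obtained not from affine combinations but by selecting, in a \del \ way, those presentation points that lie in the \del \ open set $A$ (the index set $\set{i}{r_i \in A}$ is \del \ because $i \mapsto r_i$ is recursive); this gives a presentation of $(A,d_A)$ recursive in a hyperarithmetical parameter. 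On the complement, which is closed, the restriction of $d$ is already complete, and the vector structure is used only topologically, exactly where you located the difficulty: points of norm $>1$ are dense in $\set{x}{\|x\|\geq 1}$ because a sphere point can be pushed outward by scaling, so again there is a \del \ dense sequence and a hyperarithmetical presentation. The direct sum $(A,d_A)\oplus(\ca{X}\setminus A,d_{A^c})$ is then the required Polish extension, and the parameter assembling the two presentations is in \del. In short: keep your density computations, but replace the rational-combination argument by \del-selection of presentation points, and route everything into this direct construction rather than into Theorem \ref{theorem G_delta and complement G_delta}.
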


\begin{proof}
It is enough to prove the conclusion for $A = \set{x \in \ca{X}}{d(x,0) < 1}$.
Define $d_A(x,y) = d(x,y) + |d(x,A)^{-1} - d(y,A)^{-1}|$ for all $x,y \in A$. Then $d_A$ is a distance function on $A$ which generates the same topology as the restriction of the topology of the norm on $A$. Moreover the metric space $(A,d_A)$ is complete. It is easy to find a \del \ sequence $(x_n)_{\n}$ in $A$ such that the set $\set{x_n}{\n}$ is dense in $A$. Then the sequence $(x_n)_{\n}$ is a presentation of $(A,d_A)$ which is recursive in some hyperarithmetical parameter.

Now we deal with the complement of $A$. Let $d_{A^c}$ be the restriction of the distance function $d$ on $\ca{X} \setminus A$. Since the latter set is closed it follows that $(\ca{X}\setminus A, d_{A^c})$ is complete and separable. Moreover it admits a presentation which is recursive in some hyperarithmetical parameter, for there is a \del \ sequence $(y_n)_{\n}$ with $\|y_n\| > 1$ for all \n, which is dense in $\ca{X} \setminus A$. Finally consider the direct sum $(A,d_A) \oplus (\ca{X} \setminus A, d_{A^c})$.
\end{proof}

\section{A Uniformity Result.}

We conclude this article with the uniformity result that we have mentioned in the Introduction.

\begin{definition}

\label{definition of codings of distance functions}

\normalfont

We denote by \distf \ the set of all distance functions on \om \ and by \completion{d} the completion of the metric space $(\om,d)$, where $d \in \distf$. It is clear that every complete and separable metric space, which is an infinite set, is isometric to a space of the form \completion{d} for some $d \in \distf$, so the set \distf \ can characterize up to isometry all complete and separable metric spaces. Every $d \in \distf$ can be encoded by a member of \ca{N}, for $d$ is just a double sequences of real numbers. In our case it is enough to work with distance functions on $\om$ which take only rational values; it is a bit shorter to encode these.

Suppose that $d \in \distf$ takes only rational values. We define $\beta_d \in \ca{N}$ as follows: $\beta_d(\langle i,j,m,n \rangle) = 1 \eq d(i,j) = m (n+1)^{-1}$ and $\beta_d(t)$ is $0$ in any other case. We say that $\beta \in \ca{N}$ \emph{encodes the complete and separable space $(\ca{X},d^\ca{X})$} if there is a distance function $d$ with rational values such that $\beta = \beta_d$ and $(\ca{X},d^\ca{X})$ is isometric to \completion{d}.

We will also use the following encoding of $y$-recursive functions from \ca{N} to \ca{N}, where $y$ varies through a recursively presented Polish space \ca{Y}, as given in 7A of \cite{yiannis_dst}. For all recursively presented Polish spaces \ca{X} consider the  $G^\ca{X} \subseteq \om \times \ca{X}$ as defined in 3H.1 of \cite{yiannis_dst}, which is in $\Sigma^0_1$ and universal for $\Sigma^0_1 \upharpoonright \ca{X}$. Fix now a recursively presented Polish space \ca{Y}. For every function $\pi: \ca{N} \to \ca{N}$ which is $y$-recursive for some $y \in \ca{Y}$ there is some $e \in \om$ such that for all $s \in \om$ we have that $\pi(\alpha) \in N(\ca{N},s) \eq G^{\ca{Y} \times \ca{N} \times \om}(e,y,\alpha,s)$. We consider such a number $e$ as a \emph{code of the function $\pi$}. We define the set $Cod^\ca{Y} \subseteq \om \times \ca{Y}$ by
$
Cod^\ca{Y}(e,y) \eq (\forall \alpha)(\exists \ \textrm{a unique} \ \beta)(\forall s)[\beta \in N(\ca{N},s) \longleftrightarrow G^{\ca{Y} \times \ca{N} \times \om}(e,y,\alpha,s)].
$
It is easy to verify that $Cod^\ca{Y}$ is a \pii \ set and so the set of codes of (total) $y$-recursive functions is in $\pii(y)$.
\end{definition}

\begin{theorem}

\label{theorem uniformity result}

Suppose that \ca{Z} is a Polish space, \ca{X} is a closed subset of \ca{N} and that $P$ is a Borel subset of $\ca{Z} \times \ca{X}$ such that every section $P_z$ is neither the empty set nor the whole space \ca{X}. Assume moreover that for all $z \in \ca{Z}$ one of the following cases applies to $P_z$: \tu{(A)} $P_z$ is countable, \tu{(B)} $P_z$ is co-countable, \tu{(C)} $P_z$ is a $\tboldsymbol{\Delta}^0_2$ set and $\del$ is dense both in $P_z$ and in $\ca{X} \setminus P_z$.

Then there is a Borel-measurable function $f: \ca{Z} \to \ca{N}$ such that for all $z \in \ca{Z}$ the following are true: \tu{(a)} there is a distance function $d^\ca{X}_z$ on \ca{X} such that the metric space $(\ca{X},d^\ca{X}_z)$ is complete and separable, \tu{(b)} $f(z)$ encodes the space $(\ca{X},d^\ca{X}_z)$, \tu{(c)} the topology induced by $d^\ca{X}_z$ extends the original one, \tu{(d)} the section $P_z$ is $d^\ca{X}_z$-clopen, and \tu{(e)} a subset of \ca{X} is Borel in the original topology exactly when it is Borel in the topology induced by $d^\ca{X}_z$.
\end{theorem}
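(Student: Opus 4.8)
The plan is to carry out the construction of Theorem \ref{theorem main} uniformly in the parameter $z$ and to verify that every object it produces depends on $z$ in a Borel way. First I would fix a Borel code $\delta \in \ca{N}$ of $P$; by the Relativization Principle we may regard $\ca{Z}$, $\ca{X}$ and $P$ as $\delta$-recursively presented, with $\ca{X}$ a $\delta$-recursively zero-dimensional closed subset of \ca{N}, and with each section $P_z$ a $\del(\langle\delta,z\rangle)$ subset of \ca{X} uniformly in $z$. Since $\ca{X}$ is fixed, its scattered (Cantor--Bendixson) part is a countable set of $\del(\delta)$ points; handling that piece directly, we may assume $\ca{X}$ is perfect. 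The three cases (A), (B), (C) are then precisely the hypotheses of Theorems \ref{theorem countable sets admit good parameter in del} and \ref{theorem G_delta and complement G_delta}, relativized to $\langle\delta,z\rangle$: in each of them the proofs furnish $\Pi^0_1(\langle\delta,z\rangle)$ sets $F^z_1,F^z_2\subseteq\ca{N}$ and $\langle\delta,z\rangle$-recursive injections $\pi^z_1,\pi^z_2$ (with continuous inverses where needed, via Lemma \ref{lemma pi02 has continuous inverse}), out of which the extended complete metric $d^\ca{X}_z$ is assembled exactly as in Theorem \ref{theorem main}.

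The core of the argument is that $z\mapsto d^\ca{X}_z$ is Borel. Following Theorem \ref{theorem main}, $d^\ca{X}_z$ is determined by codes of $\pi^z_1,\pi^z_2$ together with the relations $E_i(z,s)\eq F^z_i\cap N_s\neq\emptyset$, where $N_s=\set{\alpha\in\ca{N}}{\alpha(j)=(s)_j\ \forall j<lh(s)}$. The function codes are obtained recursively in $\delta$ from $z$ through the parameterized effective Lusin--Suslin Theorem \ref{theorem yiannis 4A7} and Lemma \ref{lemma pi02 has continuous inverse}, so they are Borel in $z$; thus it suffices to show each $E_i$ is $\del(\delta)$ as a relation on $\ca{Z}\times\om$. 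Each $E_i$ is visibly $\sig(\delta)$. For the $\pii(\delta)$ direction I would reprove, as in Theorems \ref{theorem countable sets admit good parameter in del} and \ref{theorem G_delta and complement G_delta}, the equivalence
\[
E_i(z,s)\eq(\exists\alpha\in\del(\langle\delta,z\rangle))[\alpha\in N_s\cap F^z_i],
\]
whose nontrivial implication comes from density together with the relativized 4F.20 of \cite{yiannis_dst} on the comeager side, and from the Effective Perfect Set Theorem (4F.1 of \cite{yiannis_dst}) on the countable side. By the Theorem on Restricted Quantification (4D.3 of \cite{yiannis_dst}), applied with $\langle\delta,z\rangle$ as the principal variable and $s$ as an auxiliary parameter, the right-hand side is $\pii(\delta)$. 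Hence $E_i\in\del(\delta)$, the assignment $z\mapsto(\ep^z_1,\ep^z_2)$ is $\del(\delta)$-recursive, and the map $f$ sending $z$ to a code $f(z)\in\ca{N}$ encoding $(\ca{X},d^\ca{X}_z)$ in the sense of Definition \ref{definition of codings of distance functions} is Borel. Conditions (a)--(e) for each fixed $z$ are inherited verbatim from Theorem \ref{theorem main}, the clause on identical Borel structure coming from assertions (3)--(4) there via the $\del(\langle\delta,z\rangle)$-recursiveness and continuity of $\pi^z_1,\pi^z_2$ and of their inverses.

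The main obstacle is uniformity: every pointwise ingredient must be upgraded to a form that varies Borel-measurably with $z$. The most delicate step is producing $\Pi^0_2(\langle\delta,z\rangle)$ representations of $P_z$ and $\ca{X}\setminus P_z$ in a way that depends $\del(\delta)$-recursively on $z$, since Lemma \ref{lemma pi02 has continuous inverse}, and hence the continuity of $(\pi^z_i)^{-1}$ that was essential for the density argument, requires a genuine lightface $\Pi^0_2$ bound. For this I would invoke the uniform, parameterized form of Louveau's theorem (\cite{louveau_a_separation_theorem_for_sigma_sets}), which turns the hypotheses ``$P_z\in\tboldsymbol{\Delta}^0_2$ and $P_z\in\del(\langle\delta,z\rangle)$'' into an honest $\Pi^0_2(\langle\delta,z\rangle)$ description. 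Once this is in place, the collapse $E_i\in\del(\delta)$ via restricted quantification is the decisive point that converts the pointwise existence of good hyperarithmetical parameters into a single Borel selection $f$.
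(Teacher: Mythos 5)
Your overall architecture --- fix a Borel code $\delta$ of $P$, rerun the constructions of Theorems \ref{theorem countable sets admit good parameter in del} and \ref{theorem G_delta and complement G_delta} section by section, and show the resulting presentation data depend on $z$ in a Borel way --- matches the intent of the paper, and your pointwise ingredients (density plus 4F.20 on the comeager side, the Effective Perfect Set Theorem on the countable side, the Theorem on Restricted Quantification for the $\pii$ bound) are the right ones. The gap is in the uniformization engine. The paper does not uniformize the ingredients one by one; it defines two relations $R_1,R_2$ in the variables $(z,e,T,\ep,\alpha)$ saying that $T$ is an $(\ep,z)$-recursive tree, $e$ codes a total $(\ep,z)$-recursive function injective on $[T]$ with image $P_z$ (resp.\ $\ca{X}\setminus P_z$), and $\alpha$ codes a dense sequence in $[T]$; it proves $R_1,R_2$ are $\pii$, observes that by the pointwise theorems each section $R_i(z,\cdot\,)$ has a member in $\del(z)$, and then applies the Strong $\Delta$-Selection Principle (4D.6 of \cite{yiannis_dst}) to select witnesses $\del$-recursively in $z$. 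Crucially, the hyperarithmetical parameter $\ep$ is one of the \emph{selected} variables, so only the pointwise Louveau theorem is ever needed. Your proposal instead invokes ``the uniform, parameterized form of Louveau's theorem'', citing \cite{louveau_a_separation_theorem_for_sigma_sets}. That result is pointwise: for each fixed $z$ it gives the existence of some $\ep\in\del(\delta,z)$ with $P_z\in\Pi^0_2(\ep,\delta,z)$; it does not provide a Borel map $z\mapsto\ep(z)$, and no such uniform statement appears in the cited reference. Producing that Borel selection is precisely the problem to be solved, and the natural way to solve it is the $\Delta$-selection argument you never mention; as written, the crux of the theorem is hidden inside an unavailable citation.

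There is a second, related defect. Your definition of $F^z_1,F^z_2$ (hence of $E_i(z,s)\eq F^z_i\cap N_s\neq\emptyset$) necessarily branches on which of \tu{(A)}, \tu{(B)}, \tu{(C)} holds at $z$: on a countable side you must take the representation of Theorem \ref{theorem yiannis 4A7} and use the Effective Perfect Set Theorem (a countable dense-in-itself set is not $G_\delta$, so Lemma \ref{lemma pi02 has continuous inverse} does not apply to it), whereas on a co-countable or $\tboldsymbol{\Delta}^0_2$ side you must take the representation of Lemma \ref{lemma pi02 has continuous inverse} in order to have a continuous inverse. But the set of $z$ for which $P_z$ is countable is in general only $\boldpii$, not Borel, so a case-defined relation $(z,\alpha)\mapsto \textrm{``}\alpha\in F^z_i\textrm{''}$ is not ``visibly $\sig(\delta)$''; it need not even be Borel, and the equivalence you use to put $E_i$ in $\del(\delta)$ collapses with it. The paper's formulation is immune to this: the same $\pii$ relations $R_i$ serve in all three cases, the case hypothesis being used only to verify pointwise that $\del(z)$ witnesses exist. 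To repair your proof, replace the uniform-Louveau black box and the case analysis by packaging the parameter, the tree, the function code and the dense sequence into a single $\pii$ witness relation per side and invoking 4D.6.
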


\begin{proof}
Fix for the moment some $z \in \ca{Z}$ and let $F_i \subseteq \ca{N}$ be closed and $\pi_i: \ca{N} \to \ca{X}$ be continuous such that $\pi_i$ is injective on $F_i$ for $i=1,2$, $\pi_1[F_1] = P_z$ and $\pi_2[F_2] = \ca{X} \setminus P_z$. As usual we define the distance functions $d^1_z$ and $d^2_z$ on $P_z$ and $\ca{X} \setminus P_z$ respectively so that the $\pi_i$'s become isometries. The space $(\ca{X}, p_z):= (P_z,d^1_z) \oplus (\ca{X} \setminus P_z,d^2_z)$ satisfies the conclusions (c)-(e). Consider now sequences $(\alpha^i_n)_{\n}$ of distinct terms which are dense in $F_i$, $i=1,2$. It follows that the sequence $(\pi_1(\alpha^1_0), \pi_2(\alpha^2_0), \pi_1(\alpha^1_1), \pi_2(\alpha^2_1), \dots)$ is dense in $(\ca{X}, p_z)$. We define $d \equiv d(z) \in \distf$ by $d(2i,2j) = p_\ca{N}(\alpha^1_i,\alpha^1_j)$, $d(2i+1,2j+1) = p_\ca{N}(\alpha^2_i,\alpha^2_j)$ and $d(2i,2j+1) = d(2i+1,2j) = 2$. It is clear that \completion{d(z)} is isometric to $(\ca{X}, p_z)$. Notice that $d(z)$ takes only rational values. We consider the $\beta_{d(z)}$ which encodes $d(z)$ as in Definition \ref{definition of codings of distance functions}. We will show that there is a Borel way of choosing this $\beta_{d(z)}$.

Let us assume for simplicity that \ca{Z} is recursively presented, \ca{X} is recursively zero-dimensional and that $P$ is in \del, so that every section $P_z$ is in $\del(z)$. We will apply the Strong $\Delta$-Selection Principal  (4D.6 in \cite{yiannis_dst}), where the underlying pointclass is $\Gamma=\pii$. From our hypothesis and the construction in the previous proofs it follows that for all $z \in \ca{Z}$ there are $\ep \in \del(z)$, sets $F_i \subseteq \ca{N}$ in $\Pi^0_1(\ep,z)$ and $(\ep,z)$-recursive functions $\pi_i: \ca{N} \to \ca{X}$, $i=1,2$, such that $\pi_1[F_1] = P_z$, $\pi_2[F_2] = \ca{X} \setminus P_z$ and $\pi_i$ is injective on $F_i$, $i=1,2$. Moreover there are $\del(z)$ sequences $(\alpha^i_n)_{\n}$, which are dense in $F_i$ for $i=1,2$ and -as we may assume- consist of distinct terms.

We consider the space $Tr$ of trees on the natural numbers, c.f. (4.32) in \cite{kechris_classical_dst}. This is easily a recursively presented Polish space. Moreover every $F \subseteq \ca{N}$ in $\Pi^0_1(z)$ is the body $[T]$ of some $z$-recursive tree $T$; this is immediate from 4A.1 in \cite{yiannis_dst}. We define the relation $R_1 \subseteq \ca{Z} \times \om \times Tr \times \ca{N}^2$ as follows
\begin{eqnarray*}
R_1(z,e,T,\ep,\alpha) &\eq& \textrm{$T$ is an $(\ep,z)$-recursive tree }\\
                            &&\& \ e \ \textrm{encodes a total $(\ep,z)$-recursive function $\pi: \ca{N} \to \ca{N}$}\\
                            &&\ \ \ \textrm{which is injective on $[T]$}\\
                            &&\& \ \pi[[T]] = P_z \ \& \ ((\alpha)_i)_{\iin} \ \textrm{is dense in} \ [T]
\end{eqnarray*}

Similarly we define the relation $R_2$ by replacing $P_z$ with $\ca{X} \setminus P_z$. From our previous comments and from the fact that $P_z \neq \emptyset, \ca{X}$ it follows that for all $z \in \ca{Z}$ there are $(e_i,T_i,\ep_i,\alpha_i) \in \del(z)$ such that $R_i(z,e_i,T_i,\ep_i,\alpha_i)$ for $i=1,2$. Suppose for the moment that have proved that the $R_i$'s are \pii \ sets. Then from the Strong $\Delta$-Selection Principle there are \del-recursive functions $g=(g_1,g_2,g_3,g_4), h=(h_1,h_2,h_3,h_4): \ca{Z} \to \om \times Tr \times \ca{N}^2$ such that for all $z \in \ca{Z}$ we have that
$R_1(z,g(z))$ and $R_2(z,h(z))$. Let $(\alpha^1_i)_{\iin}$ and $(\alpha^2_i)_{\iin}$ be the sequences which arise from $g_4(z)$ and $h_4(z)$ respectively. As above we define $d \equiv d(z) \in \distf$ as follows: $d(2i,2j) = p_\ca{N}(\alpha^1_i,\alpha^1_j)$, $d(2i+1,2j+1) = p_\ca{N}(\alpha^2_i,\alpha^2_j)$ and $d(2i,2j+1) = d(2i+1,2j) = 2$. Finally we define $f(z) = \beta_{d(z)}$ for all $z \in \ca{Z}$. It is clear that $f$ is \del-recursive and thus it is Borel-measurable. According to the previous comments this function $f$ satisfies conclusions (a)-(e).

So it remains to verify that the sets $R_1$ and $R_2$ are indeed in \pii. We consider the set $Cod^{\ca{N} \times \ca{Z}} \equiv Cod$ as in the comments following Definition \ref{definition of codings of distance functions}. We also define the relation $Gr(e,\ep,z,\alpha,\beta) \eq (\forall s)[\beta \in N(\ca{N},s) \longleftrightarrow G^{\ca{N}^2 \times \ca{Z} \times \om}(e,\ep,z,\alpha,s)]$. In other words, when $Cod(e,\ep,z)$ holds, $Gr(e,\ep,z,\alpha,\beta)$ means that $\beta$ is the image of $\alpha$ under the $(\ep,z)$-recursive function encoded by $e$. Clearly $Gr$ is in \del. It is now easy to see that $e$ encodes an $(\ep,z)$-recursive function $\pi: \ca{N} \to \ca{N}$ which is injective on $[T]$ and $\pi[[T]] = P_z$ exactly when:

(i) $Cod(e,\ep,z) \ \& \ (\forall \alpha,\beta)[\alpha \in [T] \ \& \ Gr(e,\ep,z,\alpha,\beta) \longrightarrow \beta \in P_z]$ and

(ii) $\forall (\alpha_1,\alpha_2,\beta_1,\beta_2)(\forall i=1,2)[Gr(e,\ep,z,\alpha_i,\beta_i) \ \& \ \alpha_1 \neq \alpha_2 \longrightarrow \beta_1 \neq \beta_2]$ and

(iii) $(\forall \beta)(\exists \alpha \in \del(\ep,\beta))[\beta \in P_z \longrightarrow Gr(e,\ep,z,\alpha,\beta)]$.

(Here we are using that for all $\beta \in P_z$ the unique $\alpha \in [T]$ for which $\pi(\alpha)= \beta$ is in $\del(\ep,z,\beta)$.) Using a similar method with universal sets one can prove that the relation $Q_1 \subseteq \ca{N} \times \ca{Z} \times Tr$ defined by $Q_1(\ep,z,T) \eq T$ \emph{is $(\ep,z)$-recursive}, is in \del. Moreover it is easy to check using the definition that the relation $Q_2 \subseteq \ca{N} \times Tr$ defined by $Q_2(\alpha,T) \eq ((\alpha)_s)_{s \in \om}$ \emph{is dense in} $[T]$, is in \pii. It follows that $R_1$ is in \pii. Similarly one shows that $R_2$ is in \pii.
\end{proof}

\bibliographystyle{amsplain}

\bibliography{my_bibliography}

\end{document}